\newtheorem{theorem}{Theorem}[section] 
\newtheorem{proposition}[theorem]{Proposition} 
\newtheorem{corollary}[theorem]{Corollary} 
\newtheorem{lemma}[theorem]{Lemma}
\newtheorem{example}[theorem]{Example} 
\newtheorem{remark}[theorem]{Remark} 
\newtheorem{definition}[theorem]{Definition}
\def\N{\mathbb{N}}
\def\R{\mathbb{R}}
\def\eps{\varepsilon}
\newcommand{\bbR}{\mathbb{R}}
\newcommand{\bbZ}{\mathbb{Z}}
\newcommand{\bbN}{\mathbb{N}}
\newcommand{\calX}{\mathcal{X}}
\newcommand{\calL}{\mathcal{L}}
\newcommand{\calG}{\mathcal{G}}
\def\norm#1{\left\lVert#1\right\rVert}
\def\abs#1{\left|#1\right|}
\title[Quasilinear bistable equations]{Heteroclinic solutions of  singular quasilinear\\ bistable equations} 
\author{Denis Bonheure}
\address{Denis Bonheure 
\newline \indent Département de Mathématiques, Université Libre de Bruxelles,
\newline \indent CP 214, Boulevard du triomphe, B-1050 Bruxelles, Belgium,
\newline \indent and INRIA- team MEPHYSTO.}
\email{Denis.Bonheure@ulb.ac.be}
\author{Isabel Coelho}
\address{Isabel Coelho 
\newline \indent {\'A}rea Departamental de Matem{\'a}tica, Instituto Superior de Engenharia de Lisboa,
\newline \indent Rua Conselheiro Em{\'i}dio Navarro 1, 1950-062 Lisboa, Portugal.}
\email{icoelho@adm.isel.pt}
\author{Manon Nys}
\address{Manon Nys 
\newline \indent Dipartimento di Matematica Giuseppe Peano, Università degli Studi di Torino, 
\newline \indent Via Carlo Alberto, 10 10123 Torino, Italy.}
\email{manonys@gmail.com}
\thanks{D.B. is supported by INRIA - Team MEPHYSTO, MIS F.4508.14 (FNRS), PDR T.1110.14F (FNRS) 
\& ARC AUWB-2012-12/17-ULB1- IAPAS. I.C. gratefully acknowledges the support of Funda\c c\~ao para a Ci\^encia e a Tecnologia, through the scholarship SFRH/BD/61484/2009, during her stay at the Universit\'e Libre de Bruxelles. M.N. is supported by the project ERC Advanced Grant 2013 n. 339958: "Complex Patterns for Strongly Interacting Dynamical Systems - COMPAT". M.N. was supported by the FNRS when this research was initiated.}
\begin{document} 

\begin{abstract} 
In this note we consider the action functional 
\[
\int_{\mathbb{R}\times \omega} \left( 1 - \sqrt{1 - |\nabla u|^{2}} + W(u) \right) \mathrm{d}\bar{x}
\]
where $W$ is a double well potential and $\omega$ is a bounded domain of $\R^{N-1}$. We prove existence, one-dimensionality and uniqueness (up to translation) of a smooth minimizing phase transition between the two stable states $u=1$ and $u=-1$.
The question of existence of at least one minimal heteroclinic connection for the non autonomous model
\[
\int_{\mathbb{R}} \left( 1 - \sqrt{1 - |u'|^{2}} + a(t)W(u) \right) \mathrm{d}t
\]
is also addressed. For this functional, we look for the possible assumptions on $a(t)$ ensuring the existence of a minimizer. 
\end{abstract}

\subjclass[2000]{34C37,37J45} \keywords{Mean curvature operator in Lorentz-Minkowski space, free energy functional, phase transition, increasing rearrangement, rigidity, timelike heteroclinic, special relativity, symmetry}

\maketitle


\section{Introduction}


It is known that any minimizer of the Allen-Cahn energy functional
\[
\mathcal E(u) = \int_{\mathbb{R}\times \omega} \left( \frac12|\nabla u|^{2} + \frac{1}{4}(u^{2}-1)^{2} \right) \mathrm{d}\bar{x}
\]
among functions of $H^{1}_{\text{loc}}(\mathbb{R}\times \omega)$ satisfying $\lim_{x \rightarrow \pm \infty} u(x,y) = \pm 1$ uniformly in $y \in \omega\subset\R^{N-1}$, $\omega$ being a bounded domain of $\R^{N-1}$  for $N\ge1$,  must be a function of the first variable only, and is therefore given by $\tanh(\frac{x+a}{\sqrt2})$, where $a\in \R$, see for instance \cite{KawohlBook,Carbou1995,Alberti2000}.

This minimization problem takes its origin in the van der Waals-Cahn-Hilliard gradient theory of phase separations \cite{CahnHilliard1958,AllenCahn1979,Rowlinson1979}. Assume a two-phase fluid whose density at point $\bar{x}$ is denoted by $u(\bar{x})$ has energy density given by $\frac{1}{4}(u^{2}-1)^{2}$. The densities $u =\pm 1$ correspond to stable fluid phases. When the two distinct phases coexist, they minimize their interaction. At a microscopic scale, this amounts to study the above energy functional. The term $\int\frac{1}{4}(u^{2}-1)^{2}$ penalizes the interaction of the two phases while the presence of the kinetic term $\int|\nabla u|^{2}$ accounts to penalize the formation of unnecessary interfaces. In the context of superconductivity, the functional $\mathcal E$ is known as the Ginzburg-Landau energy without magnetic field, see e.g. \cite{BethuelBrezisHeleinBook}.

\bigbreak

In this paper, we first consider the modified autonomous free-energy functional 
\begin{equation} 
		\label{functionalJ}
\mathcal{J}(u) = \int_{\mathbb{R}\times \omega} \left( 1 - \sqrt{1 - |\nabla u|^{2}} + W(u) \right) \mathrm{d}\bar{x}.
\end{equation}

We suppose that  $N\ge1$, $\omega\subset\R^{N-1}$ is a bounded domain and $W$ is a double well potential by which we mean that 
\begin{itemize}
\item[$(W_1)$] $W \in C^{1}(\mathbb{R})$,	
\item[$(W_2)$] $W(-1) = W(1) = 0$ and $W(s)>0$ if $s\ne \pm 1$.
\end{itemize}
The Allen-Cahn potential $W(s)=\frac{1}{4}(s^{2}-1)^{2}$ is of course a typical example, though we will not assume symmetry nor non degeneracy of the bottoms of the wells. 

We look for a minimal transition from the equilibrium state $u=-1$ to the equilibrium state $u=+1$ by minimizing $\mathcal{J} : \mathcal{X} \rightarrow \mathbb{R}^{+} \cup \{+\infty \}$, where
\begin{multline}  \label{spaceX}
\mathcal{X}:= \Big\{u \in  W^{1,\infty}(\mathbb{R}\times \omega)	 \mid 	\norm{\nabla u}_{L^\infty} \le 1	
\text{ and } \lim_{x \rightarrow \pm \infty} u(x,y) = \pm 1 \mbox{ uniformly in y} \in \omega \Big\}.
\end{multline}
Since  $1- \sqrt{1 - |\nabla u|^{2}}$ is an uniformly strictly convex function of the gradient, the volume integral
\begin{equation}	\label{eq:vol-int}
K(u) = \int_{\mathbb{R}\times \omega} \left( 1 - \sqrt{1 - |\nabla u|^{2}} \right) \mathrm{d}\bar{x}
\end{equation}
can be seen by analogy with classical models as an energy which attempts to minimize the area of the interface between the two phases. Such a model supposes that all transitions occur with an a priori bounded gradient (we have normalized this a priori bound to $1$ but this is clearly not restrictive). Due to the singularity at this threshold value of the norm of the gradient, sharp transitions between the two phases are clearly penalized. On the other hand, we have the pointwise  estimates
\begin{equation}
		\label{quadraticEstimates}
\frac12 |\nabla u|^2 \le 1-\sqrt{1-|\nabla u|^2} \le |\nabla u|^2.
\end{equation}
Therefore, for small values of the gradient, the energy behaves quadratically while it reaches its extremal value $1$ with infinite slope when $|\nabla u|$ reaches its upper bound. As a consequence, the diffusion flux $\frac{\nabla u}{\sqrt{1 - |\nabla u|^{2}}}$ is singular at the maximal value of the gradient. 

\medbreak

The quasilinear operator 
\[
Q(u) = -\nabla. \left(\frac{\nabla u}{\sqrt{1 - |\nabla u|^{2}}} \right),
\]
which appears, at least formally, as the derivative of the volume integral \eqref{eq:vol-int},
has been extensively studied in the recent years. We quote for example \cite{BereanuMawhin2004,BereanuMawhin2007,BereanuMawhin2008} which are concerned with a one-dimensional Neumann problem. For the Dirichlet problem we mention \cite{BereanuMawhin2007,BereanuMawhin2008,CoelhoCorsatoObersnelOmari2012,BereanuPetruTorres2013,
BereanyPetruTorres2013-2,delaFuenteRomeroTorres2015}, in one-dimensional or radial settings, and \cite{CorsatoObersnelOmariRivetti2013,BereanuJebeleanSerban2015,BereanuJebeleanMawhin2016corr} in a general bounded domain; 
while the periodic problem is addressed in \cite{BereanuMawhin2007,BereanuJebeleanMawhin2010,MawhinBrezis2010,BrezisMawhin2011,
JebeleanMawhinSerban2015,JebeleanMawhinSerban2016}. 
We also refer to the survey of Mawhin \cite{Mawhin2014} and the references therein to be as exhaustive as possible. 
To our knowledge, less attention has been given to problems in unbounded domains or in the whole space. We mention in that direction \cite{BonheureDerletDeCoster2012,Azzollini2014,Azzollini2015,BonheureDaveniaPomponio2015,
BereanudelaFuenteRomeroTorres2016,
BonheureCoelhoDeCoster2016} for ground state type solutions in the whole space and \cite{CupiniMarcelliPapalini2011,CoelhoSanchez2014} where the authors considered heteroclinic solutions in  frameworks that include the operator $Q(u)$. 
The operator $Q$ is a classical object in Riemannian geometry. Indeed, if $u$ is a smooth function of $N$ variables with $\|\nabla u\|_{L^{\infty}}<1$, the graph of $u$ is (a piece of) a $N$-dimensional surface in the Lorentz-Minkowski space $\mathbb L^{N+1}$ whose local mean curvature is given by $Q(u)$, see for instance \cite{BartnikSimon1983}. The volume integral $K(u)$   can then be seen as the area integral in $\mathbb L^{N+1}$ and surfaces of minimal or maximal area solve the equation $Q(u)=0$.  Within a quite different context, namely in the nonlinear theory of electromagnetism, this operator appears in the formulation of the Born-Infeld field theory, see \cite{BI,Yang2000,Kiessling2012,BonheureDaveniaPomponio2015} and the references therein. 
 
\bigbreak

In the first part of this paper, we prove a Gibbons-type conjecture in cylinders for the singular operator $Q(u)$. Namely, we prove that the free-energy functional $\mathcal{J}$ has a minimizer in $\mathcal{X}$, and that this minimizer has to be one-dimensional, i.e. independent from $y \in \omega$.
\begin{theorem} 
		\label{theoremfinal}
Assume $(W_1)$ and $(W_2)$. The functional $\mathcal{J}$ defined in \eqref{functionalJ} attains its infimum in $\calX$. Moreover any minimizer of $\mathcal{J}$ depends only on the first variable $x \in \mathbb{R}$ and is the unique solution, up to translations, of the equation
\begin{equation} \label{eqNdim}
\displaystyle \left( \frac{u'}{\sqrt{1 - |u'|^2}}\right)' = W^{\prime}(u) \quad\hbox{ in }\bbR,		
\end{equation}
with the boundary conditions
\begin{equation} 
			\label{boundarycondNdim}
\lim_{x\to\pm\infty}\big(u(x),u'(x)\big)=(\pm 1,0).			
\end{equation}
In addition,  $\norm{ u^{\prime} }_{L^\infty}  < 1$, $u\in C^\infty$ and  $u$ satisfies the energy conservation identity
\begin{equation} 	
			\label{cons.energy}
1-\frac{1}{\sqrt{1- |u^\prime|^2}} + W(u) = 0 .	
\end{equation}
\end{theorem}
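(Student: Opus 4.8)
The strategy is to reduce the problem to one space dimension by a monotone rearrangement in the $x$-variable, and to treat the resulting one-dimensional problem almost explicitly. Since $W(\pm 1)=0=\min W$, truncating any competitor at the levels $\pm1$ lowers both $K$ and $\int W(u)$, so I may assume $-1\le u\le 1$. Writing $\mathcal{J}_1(U)=\int_{\R}\big(1-\sqrt{1-|U'|^2}+W(U)\big)\,\mathrm{d}t$ on the one-dimensional analogue $\calX_1$ of $\calX$, I claim every monotone $U\in\calX_1$ satisfies $\mathcal{J}_1(U)\ge c_1:=\int_{-1}^{1}\sqrt{W(s)(2+W(s))}\,\mathrm{d}s\in(0,\infty)$, with equality exactly along the translates of one profile $\bar U$. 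To see this, observe that the flat levels of $U$ form a null set, and substitute $s=U(t)$ on $\{U'>0\}$: with $T$ the inverse of $U$ and $T'\ge1$ (since $|U'|\le1$),
\[
\mathcal{J}_1(U)\ \ge\ \int_{-1}^{1}\big(G(T'(s))+W(s)\,T'(s)\big)\,\mathrm{d}s,\qquad G(r):=r-\sqrt{r^2-1}\quad(r\ge1),
\]
up to a nonnegative term from $\{U'=0\}\cap\{-1<U<1\}$. Since $r\mapsto G(r)+W(s)r$ is strictly convex on $[1,\infty)$, it has a unique minimizer $r_*(s)=(1+W(s))/\sqrt{(1+W(s))^2-1}>1$ (here $W(s)>0$ on $(-1,1)$ is essential), the right-hand side is minimized exactly when $T'=r_*$, i.e.\ when $U=\bar U(\cdot-a)$ with $\bar U$ the inverse of $s\mapsto\int_0^s r_*$ extended by $\pm1$, and one checks $\bar U\in\calX_1$, $\mathcal{J}_1(\bar U)=c_1$. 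Undoing the substitution gives $\sqrt{1-|\bar U'|^2}=(1+W(\bar U))^{-1}$, which is \eqref{cons.energy}; in particular $\norm{\bar U'}_{L^\infty}\le\sqrt{1-(1+\max_{[-1,1]}W)^{-2}}<1$. The constraint being now inactive, \eqref{eqNdim} follows from free first variations (or from differentiating \eqref{cons.energy}), \eqref{boundarycondNdim} from $|\bar U'|^2=1-(1+W(\bar U))^{-2}\to0$, and a bootstrap in \eqref{eqNdim} yields the regularity (in particular $\bar U\in C^\infty$ when $W\in C^\infty$, in which case $\bar U$ is a genuine heteroclinic on $\R$).

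Next I reduce the cylinder problem. Let $u\in\calX$ with $\mathcal{J}(u)<\infty$. Finiteness of $\int W(u)$ forces $\{-1+\delta\le u\le1-\delta\}$ to have finite measure for every $\delta>0$, so each superlevel set $\{u>s\}$ coincides with a half-cylinder up to a set of finite measure, and the relative monotone rearrangement in $x$ produces a competitor $u^\sharp\in\calX$: put $u^\sharp(x,y)=U_u(x)$, where $\{u^\sharp>s\}=(\phi_u(s),\infty)\times\omega$ and $\phi_u(s)$ is fixed so that this half-cylinder has the same relative volume against $\{u>s\}$. Then $u^\sharp$ is $y$-independent, nondecreasing in $x$, equimeasurable with $u$ (so $\int W(u^\sharp)=\int W(u)$), while the area term is handled by a coarea and Jensen argument: for a.e.\ $s$ the column $\R\times\{y\}$ meets $\{u=s\}$, hence $\mathcal{H}^{N-1}(\{u=s\})\ge|\omega|$; and with $\mu(s):=\int_{\{u=s\}}|\nabla u|^{-1}\,\mathrm{d}\mathcal{H}^{N-1}=|\omega|\,\phi_u'(s)$, Jensen for the convex $G$ together with the monotonicity of $m\mapsto mG(\mu/m)$ give
\[
\int_{\{u=s\}}\frac{1-\sqrt{1-|\nabla u|^2}}{|\nabla u|}\,\mathrm{d}\mathcal{H}^{N-1}\ \ge\ |\omega|\,G\!\Big(\frac{\mu(s)}{|\omega|}\Big)\ =\ \int_{\{u^\sharp=s\}}\frac{1-\sqrt{1-|\nabla u^\sharp|^2}}{|\nabla u^\sharp|}\,\mathrm{d}\mathcal{H}^{N-1}.
\]
Integrating over $s\in(-1,1)$ yields $|\nabla u^\sharp|\le1$ and $K(u^\sharp)\le K(u)$, hence $\mathcal{J}(u^\sharp)\le\mathcal{J}(u)$; and $\mathcal{J}(u^\sharp)=|\omega|\,\mathcal{J}_1(U_u)\ge|\omega|\,c_1$. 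Therefore $\inf_{\calX}\mathcal{J}=|\omega|\,c_1$, attained at $(x,y)\mapsto\bar U(x)$.

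For the rigidity, let $u$ be any minimizer, so that $\mathcal{J}(u)=|\omega|\,c_1=\mathcal{J}(u^\sharp)$ and every inequality above is an equality. By the area formula, equality in $\mathcal{H}^{N-1}(\{u=s\})\ge|\omega|$ forces the projection onto $\omega$ to have Jacobian $1$ $\mathcal{H}^{N-1}$-a.e.\ on $\{u=s\}$, i.e.\ $\nabla u/|\nabla u|=\pm e_x$ there; by coarea, $\nabla_y u\equiv0$, so $u(x,y)=V(x)$ with $V$ a minimizer of $\mathcal{J}_1$. Finally $V$ is monotone: its rearrangement $V^\sharp$ does not raise the energy, so equality holds in the ($N=1$) version of the inequality above, which rules out a non-monotone continuous transition (such a $V$ would meet a whole band of levels at least three times, contradicting the one-dimensional coarea identity). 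Hence $V=\bar U(\cdot-a)$ by the first step, and this gives one-dimensionality, equation \eqref{eqNdim}, the boundary conditions \eqref{boundarycondNdim}, uniqueness up to translation, the strict bound $\norm{u'}_{L^\infty}<1$, the energy identity \eqref{cons.energy} and the regularity.

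The main obstacle is the rearrangement step: defining the $x$-rearrangement rigorously on the unbounded cylinder (the superlevel sets have infinite volume, whence the relative construction), checking that $u^\sharp$ is Lipschitz with $|\nabla u^\sharp|\le1$, proving the P\'olya--Szeg\H{o}-type inequality $K(u^\sharp)\le K(u)$ through the coarea formula and the convexity of $G$, and extracting from its equality case that all level sets of a minimizer are flat. By comparison, checking that $\bar U\in\calX_1$ after extension, that $c_1<\infty$, and the one-dimensional calculations are routine.
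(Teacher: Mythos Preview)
Your approach is correct and takes a genuinely different route from the paper. The paper first invokes Alberti's rearrangement inequality (with a truncation $\Psi_n$ to cope with the singularity at $|\nabla u|=1$) to reduce to one dimension, then obtains a one-dimensional minimizer by the direct method, then bounds $\|u'\|_{L^\infty}$ away from $1$ through a \emph{stretching} variation $u_\theta$, and finally rules out possible minimizers with $\|\nabla u\|_{L^\infty}=1$ by a separate regularization of the Lagrangian. You instead compute the one-dimensional infimum explicitly via the substitution $s=U(t)$ and pointwise minimization of $r\mapsto G(r)+W(s)r$, which simultaneously yields the optimal profile $\bar U$, the strict bound $\|\bar U'\|_{L^\infty}<1$ and the energy identity~\eqref{cons.energy}. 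For the reduction to one dimension you replace the abstract P\'olya--Szeg\H{o} machinery by a direct coarea/Jensen argument based on the strict convexity of $G(r)=r-\sqrt{r^2-1}$ on $[1,\infty)$; crucially, its equality case does not require $\|\nabla u\|_{L^\infty}<1$, so the rigidity of minimizers follows at once, without the paper's regularization step. Your argument is shorter and more explicit (it even gives the infimum value $|\omega|\int_{-1}^{1}\sqrt{W(2+W)}\,ds$), whereas the paper's stretching and regularization techniques are more robust and reappear in Section~\ref{section:nonAutonomous}, where no explicit first integral is available.

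One minor technical remark: in the rigidity step, equality $m_s=|\omega|$ together with the area formula $\int_\omega \#\pi^{-1}(y)\,dy=\int_{\{u=s\}}J_\pi\,d\mathcal H^{N-1}\le m_s$ is what simultaneously forces $J_\pi=1$ (flat level sets) and $\#\pi^{-1}=1$ (single sheet) $\mathcal H^{N-1}$-a.e.; your sentence compresses these two ingredients, but the logic is sound.
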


The main technical difficulty in describing the phase transition consists in showing that the minimizers of the energy functional satisfy the (formaly) associated Euler-Lagrange equation. This leads to prove that the gradient of the minimizer, or simply the derivative once we know the minimizer depends on a single space variable, is bounded away from $1$.

\bigbreak

In the second part of the paper, we focus on a time-dependent one-dimensional system. In special relativity, see for instance \cite{Erkal2000}, the Lagrangian
\[
L_a(t,u(t),u^\prime(t)) = mc^{2}\left( 1 - \sqrt{1 - \frac{u^{\prime \, 2}(t)}{c^{2}}}\right) + a(t)W(u(t))
\]
describes the motion of a particle of rest mass $m$ whose time-dependent potential energy is given by $-a(t)W$. In the non-relativistic limit $c\to+\infty$, this Lagrangian yields the usual equation of motion
\[
m u''(t) = a(t)W'(u(t)).
\]
In the relativistic regime, assuming that the motion is strictly timelike, that is $|u'(t)| < c$, we can use the phase-space variables 
\[
(u,p)= \left(u,\frac{m u'}{\sqrt{1-\frac{{u'}^{2}}{c^{2}}}} \right)
\] 
so that the equations of motion can be written as a regular system of first order equations
\begin{equation*} 
u' = \frac{p}{\sqrt{m^2+\frac{p^{2}}{c^{2}}}}, \qquad \text{ and } \qquad  p'  = a(t)W'(u) .
\end{equation*}

\medbreak

For simplicity, in this paper we fix $m=1$ and $c=1$, and consider the action (observe that we emphasize the dependence of the functional on the weight $a(t)$)
\begin{equation} 
		\label{functionalL}
\mathcal{L}_a(u) := \int_{- \infty}^{+ \infty} \left(  1 - \sqrt{1 - |u'|^{2}} + a(t)W(u) \right) \mathrm{d}t.
\end{equation}

As in the first part, we will look for minimal heteroclinic solutions connecting the extremal equilibria, namely asymptotic motions $u(t)$ minimizing the action functional $\mathcal{L}_a$ in the space $\mathcal{X}_1$ defined as
\begin{equation} 
		\label{spaceX1D}
\mathcal{X}_1:= \Big\{ u \in  W^{1,\infty}(\mathbb{R}) \mid   \norm{ u^{\prime} }_{L^\infty}  \le 1 \text{ and } \lim_{t \rightarrow \pm \infty} u(t) = \pm 1  \Big\} .
\end{equation}
First, we will assume that $a(t)$ satisfies
\begin{itemize}
\item[$(a_1)$] there exist $a_1, a_2 \in\bbR$ such that  $0 < a_1 \leq a(t) \leq a_2$ for all $t \in \mathbb{R}$.
\end{itemize}
	
\medbreak

The main point here is to provide conditions on the non-autonomous term $a(t)$ which secure the existence of a minimal timelike heteroclinic solution. Observe that when $a(t)$ is constant,  the existence of a minimizer of $\mathcal{L}_a$ is already provided by Theorem \ref{theoremfinal}. 
Our main results are the two following theorems.

\begin{theorem} 	\label{thm.structural}
Assume $(W_1)$, $(W_2)$ and $(a_1)$. If
\begin{itemize}
\setlength{\itemsep}{5pt}
\item[$(b_1)$] there exists $b\in L^\infty(\bbR)$ such that $a(t) \le b(t)$ for every $t\in\R$, 
\begin{equation*}
\quad \lim_{\abs{t}\to+\infty} \big(b(t)-a(t)\big)=0
\end{equation*} 
and $\displaystyle \inf_{\calX_1} \calL_b$ is attained,  
\end{itemize}
then $ \calL_a$ attains its infimum at $u_a\in\calX_1$. Moreover, $u_a$ is a $C^1 \cap W^{2,2}_{\text{loc}}$ solution of 
\begin{align} \label{eq:nonAutonomous}
\left( \frac{u'}{\sqrt{1 - |u'|^2}}\right)'  & =   a(t) W^{\prime}(u) \quad\text{ in }\bbR, 
		\\ 	 \label{eq:nonAutonomous2}
\lim_{t \to \pm \infty}  \big(u(t), u^{\prime}(t) \big)  & =  (\pm 1 ,0),					
\end{align}
and $\| u_a \|_{L^\infty} < 1$.
\end{theorem}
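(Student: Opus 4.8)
The plan is to run the direct method for $\mathcal{L}_a$, using $(b_1)$ precisely to exclude (or absorb) the only possible loss of compactness, namely the transition layer escaping to $\pm\infty$, and then to upgrade the minimizer to a classical solution. Set $c_a:=\inf_{\mathcal{X}_1}\mathcal{L}_a$ and $c_b:=\inf_{\mathcal{X}_1}\mathcal{L}_b$. Since $W\ge 0$ by $(W_2)$ and $a\le b$ by $(b_1)$, we have $\mathcal{L}_a\le\mathcal{L}_b$ on $\mathcal{X}_1$, hence $c_a\le c_b<+\infty$. Take a minimizing sequence $(u_n)\subset\mathcal{X}_1$ for $\mathcal{L}_a$. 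Replacing $u_n$ by $\min\{1,\max\{-1,u_n\}\}$ does not increase $\mathcal{L}_a$ (truncation only decreases $|u_n'|$ pointwise and, by $(W_2)$, does not increase $W(u_n)$) and keeps $u_n\in\mathcal{X}_1$, so we may assume $\norm{u_n}_{L^\infty}\le 1$. From the lower bound in \eqref{quadraticEstimates} and $a\ge a_1>0$ we get $\int_{\bbR}|u_n'|^2\le C$ and $\int_{\bbR}W(u_n)\le C$; hence, along a subsequence, $u_n\to u$ locally uniformly, $u_n'\rightharpoonup u'$ weakly in $L^2(\bbR)$, $\norm{u}_{L^\infty}\le 1$, $\norm{u'}_{L^\infty}\le 1$, $u'\in L^2(\bbR)$, and, by Fatou, $\int_{\bbR}W(u)<+\infty$. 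Because $u$ is $1$-Lipschitz, takes values in $[-1,1]$, and $W$ is positive on $[-1,1]\setminus\{\pm 1\}$ while $\int_{\bbR}W(u)<\infty$, the limits $\ell_\pm:=\lim_{t\to\pm\infty}u(t)$ exist and lie in $\{-1,1\}$.

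Since $\sigma\mapsto 1-\sqrt{1-\sigma^2}$ is convex, $v\mapsto\int_I(1-\sqrt{1-|v'|^2})$ is weakly lower semicontinuous on $L^2(I)$ for every interval $I$; combined with Fatou this gives, for each $R>0$, $\int_{-R}^{R}(1-\sqrt{1-|u'|^2}+aW(u))\le\liminf_n\int_{-R}^{R}(1-\sqrt{1-|u_n'|^2}+aW(u_n))\le c_a$, the tails being nonnegative. If $(\ell_-,\ell_+)=(-1,1)$, then $u\in\mathcal{X}_1$ and letting $R\to\infty$ yields $\mathcal{L}_a(u)\le c_a$, so $u$ is a minimizer.

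It remains to treat the case $(\ell_-,\ell_+)\ne(-1,1)$, i.e. $\ell_+=-1$ or $\ell_-=1$; the two are symmetric, so assume $\ell_+=-1$. Fix $\delta>0$ and $T$ with $|u(t)+1|\le\delta$ for $t\ge T$, whence $|u_n(T)+1|\le\delta$ for $n$ large. Let $\hat u_n$ equal $-1$ on $(-\infty,T-1]$, be affine from $-1$ to $u_n(T)$ on $[T-1,T]$, and equal $u_n$ on $[T,+\infty)$; then $\hat u_n\in\mathcal{X}_1$ and its energy on $(-\infty,T]$ (for either $\mathcal{L}_a$ or $\mathcal{L}_b$) is $\le\kappa(\delta)$ with $\kappa(\delta)\to0$. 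Now $\mathcal{L}_b(\hat u_n)\ge c_b$ gives $\int_T^{+\infty}(1-\sqrt{1-|u_n'|^2}+bW(u_n))\ge c_b-\kappa(\delta)$, and since $\int_{\bbR}W(u_n)\le C$ and $\eps_T:=\sup_{[T,+\infty)}(b-a)\to0$ as $T\to\infty$, we get $\int_T^{+\infty}(1-\sqrt{1-|u_n'|^2}+aW(u_n))\ge c_b-\kappa(\delta)-C\eps_T$. Splitting $\mathcal{L}_a(u_n)$ at $T$, using weak lower semicontinuity on $(-\infty,T]$ for the first part and the above bound for the second, we obtain $c_a\ge c_b-\kappa(\delta)-C\eps_T$; letting $\delta\to0$ (with $T=T(\delta)\to\infty$) gives $c_a\ge c_b$, hence $c_a=c_b$. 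Then any minimizer $v$ of $\mathcal{L}_b$ (which exists by $(b_1)$) satisfies $c_a\le\mathcal{L}_a(v)\le\mathcal{L}_b(v)=c_b=c_a$, so $v$ minimizes $\mathcal{L}_a$. In all cases $\mathcal{L}_a$ attains its infimum; call a minimizer $u_a$, and (truncating if necessary) assume $\norm{u_a}_{L^\infty}\le1$.

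Finally, the regularity, which is where the real difficulty lies. On every bounded interval $[s,s']$, $u_a$ minimizes $\int_s^{s'}(1-\sqrt{1-|v'|^2}+aW(v))$ among $v$ with $\norm{v'}_{L^\infty}\le1$ and prescribed endpoints $u_a(s),u_a(s')$. The main technical obstacle is to prove $\norm{u_a'}_{L^\infty}<1$: I would establish this by a local comparison argument exploiting the strict convexity of $\sigma\mapsto1-\sqrt{1-\sigma^2}$ and its infinite slope at $\sigma=\pm1$, so that $u_a$ cannot travel at maximal speed on a set of positive measure without contradicting local minimality — in the spirit of the proof of Theorem \ref{theoremfinal} and of the heteroclinic results in \cite{CupiniMarcelliPapalini2011,CoelhoSanchez2014}. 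Once $\mu:=\norm{u_a'}_{L^\infty}<1$, the functional is Gâteaux differentiable along $C_c^\infty$ perturbations (which remain in $\mathcal{X}_1$ for small parameter), so $u_a$ solves \eqref{eq:nonAutonomous} weakly: $p:=u_a'/\sqrt{1-|u_a'|^2}$ satisfies $p'=a(t)W'(u_a)\in L^\infty_{\mathrm{loc}}$, hence $p\in W^{1,\infty}_{\mathrm{loc}}$; inverting, $u_a'=p/\sqrt{1+p^2}$, so $u_a\in C^1\cap W^{2,2}_{\mathrm{loc}}$ with $u_a''=a(t)W'(u_a)(1-|u_a'|^2)^{3/2}$. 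Since $\norm{u_a}_{L^\infty}\le1$ and $a\in L^\infty$, this formula gives $u_a''\in L^\infty(\bbR)$; as $\mathcal{L}_a(u_a)<\infty$ forces $u_a'\in L^2(\bbR)$, the globally Lipschitz function $u_a'$ must tend to $0$ at $\pm\infty$, which together with $u_a(t)\to\pm1$ yields \eqref{eq:nonAutonomous2}. Lastly $\norm{u_a}_{L^\infty}<1$: if $u_a(t_1)=1$ at $t_1:=\inf\{t:u_a(t)=1\}$ (finite by the boundary condition at $-\infty$), then replacing $u_a$ by the constant $1$ on $[t_1,+\infty)$ is admissible and strictly lowers the energy unless $u_a\equiv1$ on $[t_1,+\infty)$; the latter is excluded by uniqueness for the Cauchy problem associated with \eqref{eq:nonAutonomous} (note $u_a'(t_1)=0$ and $W'(1)=0$ since $1$ is a strict minimum of $W\in C^1$), and the bound $u_a>-1$ is symmetric.
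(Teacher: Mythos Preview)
Your existence argument takes a genuinely different route from the paper. The paper constructs a special minimizing sequence with uniformly localized transition layer (following \cite{BonheureObersnelOmari2013}), whereas you take an arbitrary minimizing sequence, extract a weak limit $u$, and deal with the possible loss of compactness \emph{a posteriori}: if the limit has the wrong asymptotics, say $\ell_+=-1$, you compare with a surgically modified competitor $\hat u_n$ and use $(b_1)$ to force $c_a=c_b$, after which any minimizer of $\mathcal L_b$ does the job for $\mathcal L_a$. This dichotomy argument is clean and buys you a shorter existence proof. One point deserves a line of justification, though: the claim that $\ell_\pm$ exist. From $\int_{\mathbb R}W(u)<\infty$ you only get that $u$ spends finite time away from $\{-1,1\}$; to exclude infinitely many oscillations between neighbourhoods of $-1$ and $+1$ you need an additional observation, e.g.\ that each such transition costs a fixed positive amount of action --- this is precisely the content of Lemma~\ref{lemma:limits}, which the paper invokes at the corresponding step.

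The genuine gap is in the regularity step, and you flag it yourself: the sentence ``I would establish this by a local comparison argument\ldots'' is not a proof. The paper handles this via Lemma~\ref{lemma:solutionC1}, which linearizes $\mathcal L_a$ around the minimizer on a bounded interval, shows that $u_a$ also minimizes the resulting strictly convex functional, and then invokes the gradient estimate of Bartnik--Simon \cite{BartnikSimon1983} for the Dirichlet problem. The stretching trick from Proposition~\ref{proposition_1D} that you allude to relies on autonomy (or at least a one-sided Lipschitz condition on $a$, cf.\ the Remark following Lemma~\ref{lemma:solutionC1}); under $(a_1)$ alone it does not go through. Finally, note that $\|u_a\|_{L^\infty}<1$ in the statement is a typo for $\|u_a'\|_{L^\infty}<1$ (compare Step~3 of the paper's proof). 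Your attempt to prove the literal reading $-1<u_a(t)<1$ via Cauchy uniqueness fails in any case: $(W_1)$ only gives $W\in C^1$, so $W'$ need not be Lipschitz, and the Cauchy problem for \eqref{eq:nonAutonomous} at the equilibrium $(u,u')=(1,0)$ is not guaranteed to have a unique solution.
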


\begin{theorem}[Periodic weight]	\label{thm.periodic}
Assume $(W_1)$, $(W_2)$ and $(a_1)$.
Suppose  also that $a$ is $T$-periodic for some $T>0$. Then $\calL_a$ attains its infimum at $u_a \in \calX_1$. Moreover, $u_a$ is a heteroclinic $C^1 \cap W^{2,2}_{\text{loc}}$ solution of \eqref{eq:nonAutonomous}--\eqref{eq:nonAutonomous2} and $\| u_a \|_{L^\infty} < 1$.
\end{theorem}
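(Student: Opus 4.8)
The plan is to prove the statement in two stages. First I would show, by a direct variational argument that exploits the $T$-periodicity of $a$, that $\inf_{\calX_1}\calL_a$ is attained. Once that is known, hypothesis $(b_1)$ of Theorem~\ref{thm.structural} holds with the admissible choice $b:=a$ (indeed $a\le b$, $b-a\equiv 0$, and $\inf_{\calX_1}\calL_b=\inf_{\calX_1}\calL_a$ is attained), so Theorem~\ref{thm.structural} immediately yields the remaining claims: the minimizer is a heteroclinic $C^1\cap W^{2,2}_{\mathrm{loc}}$ solution of \eqref{eq:nonAutonomous}--\eqref{eq:nonAutonomous2} with $\|u_a'\|_{L^\infty}<1$. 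Thus everything reduces to attainment.

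For attainment I would start from a minimizing sequence $(u_n)\subset\calX_1$ with $\calL_a(u_n)\to c:=\inf_{\calX_1}\calL_a$ (and $c<\infty$, as one sees by testing with a fixed piecewise‑linear profile joining $-1$ to $1$). Replacing $u_n$ by $\max(-1,\min(1,u_n))$ does not increase $\calL_a$ — by $(W_2)$ and because truncating does not increase $|u_n'|$ — so I may assume $\|u_n\|_{L^\infty}\le 1$; hence $(u_n)$ is equi‑Lipschitz. Since $u_n(t)\to\pm 1$ at $\pm\infty$, the number $\tau_n:=\sup\{t:u_n(t)\le 0\}$ is finite, with $u_n(\tau_n)=0$ and $u_n>0$ on $(\tau_n,+\infty)$. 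The key point now is that $\int(1-\sqrt{1-|w'|^2})$ is translation invariant and $a$ is $T$‑periodic: translating $u_n$ by a suitable multiple of $T$ and relabelling, I obtain $v_n\in\calX_1$ with $\calL_a(v_n)\to c$, $v_n(\sigma_n)=0$ for some $\sigma_n\in[0,T)$, and $v_n>0$ on $(\sigma_n,+\infty)$. By Arzel\`a--Ascoli and a diagonal extraction, along a subsequence $v_n\to u$ locally uniformly with $v_n'\rightharpoonup u'$ in $L^2_{\mathrm{loc}}(\R)$, $\|u\|_{L^\infty}\le 1$, $\|u'\|_{L^\infty}\le 1$, $u(\sigma)=0$ for some $\sigma\in[0,T]$, and $u\ge 0$ on $[\sigma,+\infty)$. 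Convexity of $w'\mapsto 1-\sqrt{1-|w'|^2}$ together with uniform convergence in the potential term gives, on each bounded interval, $\int(1-\sqrt{1-|u'|^2}+aW(u))\le\liminf_n\calL_a(v_n)=c$, hence $\calL_a(u)\le c<\infty$; in particular $W(u)\in L^1(\R)$ since $a\ge a_1>0$.

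Next I would verify that $u$ is in fact heteroclinic. As $u$ is $1$‑Lipschitz and $W(u)\in L^1(\R)$, the function $W(u)$ is uniformly continuous and integrable, so $W(u(t))\to 0$, i.e. $\mathrm{dist}(u(t),\{-1,1\})\to 0$; since $u$ is continuous and $\{-1,1\}$ is disconnected, $u(t)\to\ell_\pm$ with $\ell_\pm\in\{-1,1\}$, and $u\ge 0$ on $[\sigma,+\infty)$ forces $\ell_+=1$. The only delicate point is to exclude $\ell_-=1$, i.e. that $u$ is a spurious excursion $1\rightsquigarrow 0\rightsquigarrow 1$ rather than a connection. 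For this I would use the a priori bound, valid precisely because $a\ge a_1>0$: there is $\kappa>0$ such that any $1$‑Lipschitz $v$ on an interval $I$ with $\inf_I v\le 0$ and $\sup_I v\ge\tfrac12$ satisfies $\int_I\big(1-\sqrt{1-|v'|^2}+aW(v)\big)\,dt\ge\kappa$, since $v$ then crosses the compact band $[0,\tfrac12]$ — on which $W$ has a positive minimum — over a time span of length $\ge\tfrac12$. If $\ell_-=1$, choose $\eta\in(0,1)$ small and then $R$ so large that $u(-R)>1-\tfrac{\eta}{2}$; then $u|_{[-R,\sigma]}$ realizes the situation above, so by lower semicontinuity $\calL_a(v_n;[-R,\sigma])\ge\kappa/2$ for large $n$, while $v_n(-R)>1-\eta$. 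Replacing $v_n$ to the right of $-R$ by a ramp of slope $1$ from $v_n(-R)$ up to $1$, followed by the constant $1$, produces $w_n\in\calX_1$ with
\[
\calL_a(w_n)\ \le\ \calL_a(v_n)-\frac{\kappa}{2}+\big(1+a_2\,\omega_\eta\big)\,\eta,\qquad \omega_\eta:=\max_{[1-\eta,1]}W\ \xrightarrow[\eta\to 0]{}\ 0 .
\]
Fixing $\eta$ with $(1+a_2\omega_\eta)\eta<\kappa/4$ gives $c\le\calL_a(w_n)\le\calL_a(v_n)-\kappa/4\to c-\kappa/4$, a contradiction. Hence $\ell_-=-1$, so $u\in\calX_1$, and $\calL_a(u)\le c$ forces $\calL_a(u)=c$: the infimum is attained.

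I expect the main obstacle to be precisely this last exclusion. Translation by multiples of $T$ lets one recenter a minimizing sequence, but it does not by itself prevent the recentered limit from being a loop at a single well rather than a genuine phase transition; the surgery above rules this out because $(a_1)$ supplies a \emph{uniform} positive energy cost for crossing the middle band wherever it happens, so snipping off the loop strictly lowers the energy below $c$. (If $W$ were assumed even one could instead reflect $u$; without any symmetry of $W$ the cut‑and‑paste seems necessary.) The other ingredients — truncation, the $T$‑recentering, Arzel\`a--Ascoli compactness, weak lower semicontinuity, and the behaviour at infinity — are routine.
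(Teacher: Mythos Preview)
Your argument is correct and complete. The route differs from the paper's in where the ``surgery'' is done. The paper first prepares the minimizing sequence: invoking the construction from Step~1 of the proof of Theorem~\ref{thm.structural} (itself imported from \cite{BonheureObersnelOmari2013}), it obtains $u_n$ together with points $s_n<t_n$ such that $u_n\le -1+\overline\eps$ on $(-\infty,s_n]$, $u_n\ge 1-\overline\eps$ on $[t_n,+\infty)$, and $t_n-s_n$ is bounded; the $T$-periodicity is then only used to translate $s_n$ into $[0,T)$, after which the loop scenario simply cannot occur in the limit. You instead recenter at the last zero of $u_n$, pass to the limit first, and then exclude the possibility $\ell_-=+1$ by a cut-and-paste comparison that exploits the uniform lower bound $a\ge a_1>0$. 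Both strategies hinge on the same two facts --- translation invariance modulo $T$ and a positive energy cost for crossing a fixed band --- but your version is more self-contained, while the paper's is terser because the delicate tail control is outsourced to the cited reference. One cosmetic point: once you have attainment, rather than re-entering Theorem~\ref{thm.structural} via $b=a$, you could quote Lemma~\ref{lemma:solutionC1} directly and copy the short Step~3 argument (existence of $\lim_{t\to\pm\infty}u'(t)=0$ from the equation and $(a_1)$) to get $\|u_a'\|_{L^\infty}<1$; this avoids the mildly circular flavor of invoking a theorem whose first step re-proves the attainment you just established.
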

 
By combining Theorem \ref{thm.structural} with Theorem \ref{theoremfinal} and Theorem \ref{thm.periodic}, we deduce respectively the two following corollaries.

\begin{corollary}[Asymptotically constant  weight]
Assume $(W_1)$, $(W_2)$ and $(a_1)$.
If in addition
\begin{equation*}
\lim_{\abs{t}\to +\infty} a(t)=a_2
\end{equation*}
then $ \calL_a$ attains its infimum at some $u_a \in \calX_1$. Moreover, $u_a$ is a heteroclinic $C^1 \cap W^{2,2}_{\text{loc}}$ solution of \eqref{eq:nonAutonomous}--\eqref{eq:nonAutonomous2} and $\| u_a \|_{L^\infty} < 1$.
\end{corollary}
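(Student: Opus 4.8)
The plan is simply to apply Theorem~\ref{thm.structural} with the \emph{constant} comparison weight $b(t)\equiv a_2$ and to check the three requirements gathered in hypothesis $(b_1)$. The pointwise inequality $a(t)\le b(t)=a_2$ for every $t\in\R$ is precisely the upper bound in $(a_1)$, while
\[
\lim_{\abs{t}\to+\infty}\bigl(b(t)-a(t)\bigr)=\lim_{\abs{t}\to+\infty}\bigl(a_2-a(t)\bigr)=0
\]
is exactly the additional assumption of the corollary. Thus the only non-trivial point is that $\inf_{\calX_1}\calL_b$ be attained.

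To obtain this, I would notice that, $b\equiv a_2$ being constant, the action $\calL_b$ coincides with the autonomous free-energy functional $\mathcal{J}$ of \eqref{functionalJ} in the one-dimensional case $N=1$ (for which $\R\times\omega$ reduces to $\R$, $\nabla u$ to $u'$, and $\calX$ to $\calX_1$), provided one replaces the double well $W$ by $\widetilde W:=a_2 W$. Since $a_2>0$ by $(a_1)$, the rescaled potential $\widetilde W$ still satisfies $(W_1)$ and $(W_2)$: it is of class $C^1$, it vanishes exactly at $\pm 1$ and is strictly positive elsewhere. Hence Theorem~\ref{theoremfinal}, applied with $\widetilde W$ in place of $W$, produces a minimizer of $\calL_b=\mathcal{J}$ over $\calX_1$; in particular $\inf_{\calX_1}\calL_b$ is attained, so $(b_1)$ holds.

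With $(b_1)$ verified, Theorem~\ref{thm.structural} yields all the assertions at once: $\calL_a$ attains its infimum at some $u_a\in\calX_1$, the minimizer $u_a$ is a $C^1\cap W^{2,2}_{\text{loc}}$ solution of \eqref{eq:nonAutonomous}--\eqref{eq:nonAutonomous2}, and $\norm{u_a'}_{L^\infty}<1$; the asymptotic conditions \eqref{eq:nonAutonomous2} are exactly the statement that $u_a$ is a heteroclinic connection from $-1$ to $+1$. There is no genuine obstacle here: the only step that requires a moment's attention is the identification of the constant-weight action $\calL_{a_2}$ with the autonomous functional $\mathcal{J}$ for the rescaled double well $a_2 W$, which is what allows the existence part of Theorem~\ref{theoremfinal} to feed hypothesis $(b_1)$ of Theorem~\ref{thm.structural}.
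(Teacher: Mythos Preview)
Your proposal is correct and follows exactly the route indicated in the paper: the corollary is obtained by combining Theorem~\ref{thm.structural} with Theorem~\ref{theoremfinal}, taking the constant comparison weight $b\equiv a_2$ and observing that $\calL_{a_2}$ is the autonomous functional for the double well $a_2W$, which still satisfies $(W_1)$--$(W_2)$.
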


\begin{corollary}[Asymptotically periodic weight]
Assume $(W_1)$, $(W_2)$ and $(a_1)$.
In addition,  suppose that there exists $b\in L^\infty(\bbR)$, $T$-periodic for some $T>0$, such that $(b_1)$ holds. Then $ \calL_a$ attains its infimum at $u_a \in \calX_1$. Moreover, $u_a$ is a heteroclinic $C^1 \cap W^{2,2}_{\text{loc}}$ solution of \eqref{eq:nonAutonomous}--\eqref{eq:nonAutonomous2} and $\| u_a \|_{L^\infty} < 1$.
\end{corollary}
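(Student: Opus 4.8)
The plan is to deduce this corollary by chaining Theorem~\ref{thm.periodic} and Theorem~\ref{thm.structural}, in the same spirit as the two preceding corollaries. First I would check that the weight $b$ furnished by the hypothesis is an admissible periodic weight for Theorem~\ref{thm.periodic}. By assumption $b\in L^\infty(\bbR)$ and $b$ is $T$-periodic, and the potential $W$ is the same fixed double well, so $(W_1)$ and $(W_2)$ need nothing new. It only remains to verify $(a_1)$ for $b$: the upper bound $b(t)\le\norm{b}_{L^\infty}$ is immediate, and the lower bound follows from $(b_1)$ and $(a_1)$ for $a$, since $b(t)\ge a(t)\ge a_1>0$ for every $t\in\bbR$. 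This is the single point where the hypothesis $a\le b$ (rather than merely $\abs{a-b}\to 0$) is genuinely used.

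Next I would apply Theorem~\ref{thm.periodic} to the weight $b$. It yields that $\calL_b$ attains its infimum over $\calX_1$; in particular $\inf_{\calX_1}\calL_b$ is attained. This supplies precisely the last requirement appearing in $(b_1)$, so that $(b_1)$ is now in force in its entirety: there exists $b\in L^\infty(\bbR)$ with $a(t)\le b(t)$ for all $t$, with $\lim_{\abs t\to+\infty}(b(t)-a(t))=0$, and with $\inf_{\calX_1}\calL_b$ attained.

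Finally, since $a$ satisfies $(W_1)$, $(W_2)$, $(a_1)$ by assumption and $(b_1)$ by the previous step, Theorem~\ref{thm.structural} applies and gives a minimizer $u_a\in\calX_1$ of $\calL_a$ which is a $C^1\cap W^{2,2}_{\text{loc}}$ solution of \eqref{eq:nonAutonomous}--\eqref{eq:nonAutonomous2} with $\norm{u_a'}_{L^\infty}<1$; the boundary behaviour \eqref{eq:nonAutonomous2} is exactly the statement that $u_a$ is a heteroclinic. This closes the argument.

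I do not expect any analytic obstacle: the corollary is a bookkeeping consequence of the two main theorems, and the only substantive verification is that $b$ inherits the positive lower bound of $(a_1)$ from the pointwise inequality $a\le b$. (Note that if one only knew $\abs{a-b}\to 0$ without $a\le b$, one would be tempted to replace $b$ by $\max\{a,b\}$, which stays bounded and asymptotically close to $a$ but loses periodicity; this is why the hypothesis is naturally phrased through $(b_1)$ as stated.)
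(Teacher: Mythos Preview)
Your argument is correct and is exactly the approach the paper intends: apply Theorem~\ref{thm.periodic} to $b$ (after checking $b$ satisfies $(a_1)$ via $b(t)\ge a(t)\ge a_1>0$) to obtain that $\inf_{\calX_1}\calL_b$ is attained, and then feed this into Theorem~\ref{thm.structural}. The paper does not spell this out beyond the sentence introducing the two corollaries, so there is nothing further to compare.
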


\bigbreak

Finally, we conclude this work discussing the case where the functional $\calL_a$ satisfies additional  symmetry properties. We will consider  $(W_1)$ and the following additional assumptions
\begin{itemize}
\item[($W_{2}^\prime$)] $W(s) > 0$ if $s \in {(-1,1)}$ and $W(s) = 0$ for all $s \in \bbR \setminus (-1,1)$,
\item[$(W_3)$] $W(s) = W(-s)$ for all $s \in \mathbb{R}$,
\item[$(a_1^\prime)$] $a \in L^\infty_{\text{loc}}(\mathbb{R})$,
\item[$(a_2)$] $a(s) \le a(t)$ for a.e.~$0 \le s \le t$,
\item[$(a_3)$] $a(t) = a(-t)$ for a.e.~$t \in \mathbb{R}$,	
\item[$(a_4)$] 	there exists   $T>0$   such that   $a(t) >0$   for all   $t >T$.		
\end{itemize}

Notice that we do not impose $(a_1)$ in this setting. Moreover, $a(t)$ can take negative values on a compact set. For this reason, we consider the new assumption $(W_2')$ that forces the solution to take values in $[-1,1]$, which would not be clear (and maybe not true) under $(W_2)$. Since $W$ and $a$ are even, we will look for an odd minimizer of  $\calL_a$ in the set
\[
\calX_1^{odd} :=\{  u \in \calX_1 \mid  u(-t) = -u(t)  \  \text{ for all } t\in \bbR  \} .
\]
Obviously, this does not exclude the existence of other critical points of $\calL_a$ in $\calX_1$ that are not antisymmetric. In particular it does not imply that the global minimizer of $\calL_a$ in $\calX_1$ is an antisymmetric function. This is an open interesting problem even for simpler semilinear or quasilinear operators.

\medbreak

Due to the symmetry assumptions, we can write
\begin{equation*}
\calL_a(u)  =    2 \int_{0}^{+\infty}  \left(  1 - \sqrt{1 - | u^{\prime} |^{2}} + a(t) W(u) \right)  \, \mathrm{d}t	
\end{equation*}
whenever $u\in \calX_1^{odd} $. Therefore, we will study the functional
\begin{equation*}  
\mathcal{L}_{a}^A(u)   =  	  \int_{0}^{+\infty}  \left( 1 - \sqrt{1 - | u^{\prime} |^{2}}  + a(t) W(u) \right) \, \mathrm{d}t		
\end{equation*}
in the space
\begin{equation*} 
\mathcal{X}_{A} = \big\{ u \in W^{1,\infty} \big( [0, \infty) \big) \mid  u(0) = 0  , \  \norm{ u^{\prime} }_{L^\infty} \leq 1  \  \text{ and } \lim_{t \to +\infty} u(t) = 1  \big\}  .
\end{equation*}

In this setting, the possible loss of compactness is overcome by the monotonicity of the weight $a$. Our result for this case is the following theorem.
\begin{theorem} \label{thm:AntiSym}
Assume $(W_1)$, $(W_2')$, $(W_3)$, $(a_1^\prime)$, $(a_2)$, $(a_3)$ and $(a_4)$. The functional $\mathcal{L}_{a}^A$ attains its infimum at some $u_a \in \mathcal{X}_{A}$. Moreover, $u_a$ is a $C^1 \cap W^{2,2}_{\text{loc}}$ solution of \eqref{eq:nonAutonomous}--\eqref{eq:nonAutonomous2} and $\| u_a \|_{L^\infty} < 1$.
\end{theorem}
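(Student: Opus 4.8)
The plan is to run the direct method of the calculus of variations directly on $\mathcal{X}_A$, using the evenness $(W_3)$ and the shape $(W_2')$ of $W$ to normalise the competitors, and the monotonicity $(a_2)$ together with the positivity at infinity $(a_4)$ to recover the compactness lost on the half-line. (The assumptions $(W_3)$, $(a_3)$ are also what make this half-line problem the right one for producing an odd heteroclinic of $\mathcal{L}_a$, but $(a_3)$ plays no role in the proof below.)

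\textbf{Step 1 (normalisation and a priori bound).} First I would check that $m:=\inf_{\mathcal{X}_A}\mathcal{L}^A_a$ is finite. It is finite from above since the ramp $t\mapsto\min(t,1)$ lies in $\mathcal{X}_A$ and has finite energy (here $(a_1')$ is used). It is finite from below because $W\ge0$, because $(a_2)$ and $(a_4)$ give $T_0>T$ and $a_0>0$ with $a(t)\ge a_0$ for a.e. $t\ge T_0$, and because $a$ is bounded on $[0,T_0]$ by $(a_1')$ while $W$ is bounded on $[-1,1]$; hence $\mathcal{L}^A_a(u)\ge -C_0$ on the subclass of competitors valued in $[-1,1]$, with $C_0$ depending only on $\|a\|_{L^\infty(0,T_0)}$, $T_0$ and $\max_{[-1,1]}W$. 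Given a minimising sequence $(u_n)$, I would replace $u_n$ successively by $|u_n|$ — which leaves $\mathcal{L}^A_a$ unchanged by $(W_3)$ — and then by $\min(|u_n|,1)$ — which leaves the potential term unchanged by $(W_2')$ and does not increase the kinetic term — so that we may assume $0\le u_n\le1$. Then $\int_0^\infty aW(u_n)\ge -C_0$, so by the pointwise estimate \eqref{quadraticEstimates} the kinetic part controls $\tfrac12\|u_n'\|_{L^2(0,\infty)}^2\le m+1+C_0$. Consequently $(u_n)$ is bounded in $H^1(0,R)$ for every $R$, and along a subsequence $u_n\to u$ in $C_{\mathrm{loc}}([0,\infty))$ and $u_n'\rightharpoonup u'$ weakly in $L^2(0,\infty)$, with $u(0)=0$, $0\le u\le1$ and $\|u'\|_{L^\infty}\le1$.

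\textbf{Step 2 (the limit at infinity — the crux).} It remains to prove $u(t)\to1$ as $t\to+\infty$, which is exactly where $(a_2)$ and $(a_4)$ enter. From $a\ge a_0>0$ on $[T_0,\infty)$ and Fatou's lemma, $a_0\int_{T_0}^\infty W(u)\le\liminf_n\int_{T_0}^\infty aW(u_n)\le m+C_0<\infty$, while weak lower semicontinuity of the $L^2$ norm gives $\int_0^\infty|u'|^2<\infty$; in particular $u$ is uniformly $\tfrac12$-Hölder on $[T_0,\infty)$. If now $u(t_k)\le1-\eta$ along some $t_k\to+\infty$, uniform continuity forces $u\le1-\tfrac\eta2$ on intervals of a fixed positive length around the $t_k$, on which $W(u)\ge c(\eta)>0$ since $W>0$ on the compact set $[0,1-\tfrac\eta2]\subset(-1,1)$; summing over a spaced-out subsequence contradicts $\int_{T_0}^\infty W(u)<\infty$. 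Hence $u(t)\to1$ and $u\in\mathcal{X}_A$. Finally, lower semicontinuity of $\mathcal{L}^A_a$ — convexity of $r\mapsto 1-\sqrt{1-r^2}$ for the kinetic part, uniform convergence on $[0,T_0]$ combined with Fatou on $[T_0,\infty)$ (where the integrand is nonnegative) for the potential part — yields $\mathcal{L}^A_a(u)\le\liminf_n\mathcal{L}^A_a(u_n)=m$, so $u_a:=u$ is a minimiser.

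\textbf{Step 3 (regularity and the equation), and the main obstacle.} The genuinely delicate point, as announced in the introduction, is to show that a minimiser has $\|u_a'\|_{L^\infty}<1$: I would prove this by a local comparison argument, replacing $u_a$ on a short interval by an affine interpolant of its endpoint values (slightly enlarging the interval so the average slope is $<1$) and using the strict convexity and the infinite slope of $r\mapsto1-\sqrt{1-r^2}$ at $r=1$ to produce a strict decrease of $\mathcal{L}^A_a$ whenever $|u_a'|$ fails to stay below $1$ — the same mechanism used for Theorem \ref{theoremfinal}. Once this bound is available, variations $u_a+s\varphi$ with $\varphi\in C_c^\infty((0,\infty))$ are admissible and show that $p:=u_a'/\sqrt{1-|u_a'|^2}$ satisfies $p'=a(t)W'(u_a)$ weakly; since $aW'(u_a)\in L^\infty_{\mathrm{loc}}$ we get $p\in W^{1,\infty}_{\mathrm{loc}}$, hence $u_a'=p/\sqrt{1+p^2}\in C\cap W^{1,\infty}_{\mathrm{loc}}$, i.e. $u_a\in C^1\cap W^{2,2}_{\mathrm{loc}}$ solving \eqref{eq:nonAutonomous} a.e. The asymptotics $u_a(t)\to1$ is Step 2, and $u_a'(t)\to0$ follows from the equation together with $u_a'\in L^2$ and $\int_{T_0}^\infty aW(u_a)<\infty$ (using that the Hamiltonian $t\mapsto(1-|u_a'|^2)^{-1/2}-1-a(t)W(u_a)$ is nonincreasing by $(a_2)$ and $W\ge0$, and that its limit must vanish), giving \eqref{eq:nonAutonomous2}; and the strict bound $\|u_a\|_{L^\infty}<1$ follows from $0\le u_a\le1$ plus a maximum-principle argument on the equation, ruling out that $u_a$ reaches the value $1$ at a finite time. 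I expect Step 2 (no loss of the transition at infinity, so that $u_a\in\mathcal{X}_A$) and the interior gradient bound in Step 3 to be the two points requiring real work; everything else is the standard machinery of the direct method, as in the proofs of the earlier theorems.
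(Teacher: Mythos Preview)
Your Steps~1--2 are essentially correct and close in spirit to the paper, though you take a genuinely different route to the key compactness issue. The paper recovers the limit $u(t)\to1$ by \emph{modifying the minimising sequence}: using $(a_2)$ it cuts out any excursion of $u_n$ below $1-\varepsilon$ after time $T+1$ (shifting the tail leftward only decreases $\int aW(u_n)$ because $a$ is nondecreasing), thereby forcing the transition to occur in a bounded window $[T+1,t_n]$ with $(t_n)$ bounded. You instead pass to the limit first and then use $\int_{T_0}^\infty W(u)<\infty$ together with uniform $\tfrac12$-H\"older continuity of $u$ to rule out $\liminf u<1$. Your argument is valid and arguably more direct; the paper's surgery-on-the-sequence approach has the advantage that it reuses the same monotonicity trick later on the minimiser itself.

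Step~3 is where your proposal has a real gap. Your ``affine interpolant'' idea is not the mechanism of Theorem~\ref{theoremfinal}: that theorem uses a \emph{stretching} deformation (Definition~\ref{utheta}) and the paper explicitly remarks that extending it to the non-autonomous case requires extra hypotheses on $a$ (namely $a\in C^{0,1}$ with $a'\le Ca$). A bare affine replacement does not obviously win, since the potential term can increase by $O(|t_2-t_1|)$ and you have not quantified the kinetic gain. What the paper actually does is invoke Lemma~\ref{lemma:solutionC1}: one freezes the minimiser on $[-M,M]$, shows it also minimises the \emph{linearised} convex functional $\mathcal E_a^M$, and then appeals to the Bartnik--Simon interior gradient estimate to get $\|u_a'\|_{L^\infty(-M,M)}<1$ and the Euler--Lagrange equation. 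For the asymptotic $u_a'(t)\to0$, your Hamiltonian argument needs $a(t)W(u_a(t))\to0$, but $(a_1')$ gives only $a\in L^\infty_{\mathrm{loc}}$, so $a$ may blow up at infinity and this step is unjustified; the paper instead shows (again by a cut-and-shift using $(a_2)$) that the minimiser is increasing on $[T,\infty)$, which forces $u_a'\to0$. Finally, note that the conclusion ``$\|u_a\|_{L^\infty}<1$'' in the statement is a typo for $\|u_a'\|_{L^\infty}<1$ (since $u_a\to1$); your maximum-principle remark is addressing the wrong quantity.
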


\bigbreak

The paper is organized as follows. In Section~\ref{sec:autonomous}, we prove Theorem \ref{theoremfinal}. The proof of this theorem will be divided in several steps. First, we basically reduce the problem to a one-dimensional setting. To do this, we use the classical monotone rearrangement used in \cite{KawohlBook,Carbou1995,Alberti2000} to prove that any minimizing sequence of $\mathcal{J}$ in $\mathcal X$ can be assumed to depend only on the first variable. The arguments are quite classical, though some technical adaptations to our functional setting are required.
The existence of a one-dimensional minimizer is somehow standard. Then, using a stretching argument on the minimizer (see Definition \ref{utheta}), we obtain an a priori bound on the derivative, and therefore conclude that the minimizer solves the Euler-Lagrange equations. Finally, an additional argument is used to prove that any minimizer is one-dimensional, since we are not able to use the rearrangement argument for functions $u$ with $\|\nabla u\|_{L^{\infty}}=1$. However, since the derivative of the volume integral $K(u)$ has a convex singularity on these functions, it is expected that such minimizer cannot exist. We confirm this by performing a regularization of the volume integral, inspired from \cite{CoelhoCorsatoObersnelOmari2012}.

In Section \ref{section:nonAutonomous}, we consider the non-autonomous functional $\mathcal L_a$ given by \eqref{functionalL}
 and look for minimizers of $\calL_a$ in the functional space $\calX_1$.
There are some differences with respect  to the previous case, in particular, the conservation of energy is no longer valid.
We will therefore  argue differently to prove the  a priori bound on the derivative of the minimizers.
Then, we discuss in  Theorems \ref{thm.structural} and \ref{thm.periodic} conditions on the weight $a(t)$ that guarantee the existence of minimizers of $\calL_a$.
We prove these theorems reasoning in the same way as in the proofs of respectively \cite[Theorem 2.5, Theorem 2.4]{BonheureObersnelOmari2013}. 
Finally, Subsection \ref{subsection:oddHeteroclinics} is devoted to the study of a functional with additional symmetry properties. In particular, we prove Theorem \ref{thm:AntiSym}. 
In this case,  the monotonicity of the weight $a(t)$  plays an important role in excluding possible losses of compactness.

\subsection*{Acknowledgment}
The authors are grateful to B. Kawohl and S. Kr\"{o}mer for a fruitful discussion that lead to Theorem \ref{thm:nouveau1D}. 


\section{Optimal shape of phase separation in cylinders} \label{sec:autonomous}



\subsection{Monotone rearrangement}


To prove Theorem \ref{theoremfinal}, we will use a notion of monotone rearrangement of functions that we first recall. This rearrangement was first introduced in \cite{Carbou1995}, see also \cite{Farina1999,KawohlBook} and \cite{Alberti2000} for a deeper study. This rearrangement is a generalization of the classical monotone rearrangement for functions of a single variable. We believe that the adaptation of this rearrangement to our setting has an interest in itself. We anticipate that a shorter path to get one-dimensionality will be provided in Theorem \ref{thm:nouveau1D}. However, the monotone rearrangement has the advantage of directly providing a monotone minimizing sequence. 

\medbreak

For any function $u : \R\times \omega\to [-1,1]$, we define the level sets of $u$ as
\begin{align*}
\Omega_{c} = \left\{ \begin{aligned}
& \left\{ (x,y) \in \mathbb{R}\times \omega \mid 0 \leq u(x,y) \leq c \right\}  \quad & \text{ if } 0\le c \le 1, \\
& \left\{ (x,y) \in \mathbb{R}\times \omega \mid c < u(x,y) < 0 \right\} \quad &  \text{ if } -1 \le c < 0.
\end{aligned} \right.
\end{align*} 
\medbreak
Assume that $u : \R\times \omega\to [-1,1]$ belongs to $\mathcal X$, defined in \eqref{spaceX}, and observe that it implies that its level sets $\Omega_{c}$ have finite measure for any $c\in(-1,1)$, 
while  $m_{N}(\Omega_{-1}) = m_{N}(\Omega_{1}) = +\infty$, where $m_{N}$ represents the Lebesgue measure in dimension $N$.
Define the associated distribution function $a_{u} : (-1,1)\to\R$ by
\begin{align*}
 a_{u}(c) = \left\{ \begin{aligned} & \frac{m_{N}(\Omega_{c})}{m_{N-1}(\omega)} \quad  \text{ if }  0 \le c < 1 \\
                                   - & \frac{m_{N}(\Omega_{c})}{m_{N-1}(\omega)} \quad  \text{ if } -1 < c < 0,
                                   \end{aligned} \right.
\end{align*}
where we may assume  that $m_{N-1}(\omega)=1$, if $N=1$.
In general, the distribution function is nondecreasing, but since functions $u \in \mathcal{X}$ are continuous, the function $a_u$ is in fact increasing. In addition, one easily checks that it is right-continuous by construction.  
We emphasize that $a_{u}$  can have  at most a countable set  of jump discontinuities,
corresponding to the values of   $c \in (-1,1)$  for which  $m_{N}( \{(x,y) \in \mathbb{R}\times \omega \, |\,  u(x,y) = c\}) > 0$.

Next, we  define the one-dimensional nondecreasing rearrangement $u^{\star} : \mathbb{R}\times \omega \rightarrow [-1,1]$ of $u$ through its level sets
\begin{align*}
\Omega^{\star}_{c} = \left\{ \begin{aligned}
& \left\{ (x,y) \in \mathbb{R}\times \omega \mid 0 \leq u^{\star}(x,y) \leq c \right\} = [0, a_{u}(c)]\times \omega  & \text{ if } 0\le c < 1, \\
& \left\{ (x,y) \in \mathbb{R}\times \omega \mid c < u^{\star}(x,y) < 0 \right\} = (a_{u}(c), 0)\times \omega &  \text{ if } -1 < c < 0,
\end{aligned} \right.
\end{align*}
and	
\[
\Omega^{\star}_{1}  =  {[0, + \infty)} \times \omega \quad \text{ and }	\quad \Omega^{\star}_{-1}  =  {(-\infty, 0)}  \times \omega .
\]
We immediately remark that the function $u^{\star}$ depends only on the $x$-variable because its level sets are cylinders. In the sequel, we use the notation $u^{\star}(x) = u^{\star}(x,y)$	for all $y\in\omega$.

Basically, we aim to define $u^{\star}$ as the inverse of the function $a_{u}(c)$. This will be indeed the case if $a_{u}$ is continuous. More precisely, denoting by $I\subset\R$ the range of $a_{u}$, $x_{i}=\inf I$, and $x_{s}=\sup I$, we first set $u^{\star}(x) = -1$ for any $x\le x_{i}$ if $x_{i} > -\infty$ and $u^{\star}(x) = 1$ for any $x\ge x_{s}$ if $x_{s} < +\infty$. Next, if $x \in I$, there exists $c \in (-1,1)$ such that $a_{u}(c) = x$ and we simply define $u^{\star}(x) = c$. For any $x\in (x_{i},x_{s})\setminus I$, we consider 
\[
\xi_{x} = \inf\{\xi \geq x \, |\,  \xi \in I \}.
\]  
The value $\xi_{x} $ belongs to $I$ because $a_{u}$ is right-continuous and increasing. Therefore there exists $c_{x} \in (-1,1)$ such that $a_{u}(c_{x}) = \xi_{x}$ and we define $u^{\star}(x)= c_{x}$. One  easily sees that $u^{\star}$ is a nondecreasing function with $u^{\star}(0) = 0$. We observe that $u^\star$ is an increasing function only if $a_u$ is continuous. One can  check from its definition  that $u^{\star}$ is continuous. This will also be a consequence of the next lemma which shows that starting from a function $u\in \mathcal X$, we have $u^{\star}\in \mathcal X$.

\begin{lemma} \label{lemmalipschitz}
If $u\in\mathcal X$, takes values in $[-1,1]$ and $|\nabla u|\in L^{2}(\mathbb{R} \times \omega)$, then $u^{\star}\in\mathcal X$ and
\begin{equation} 
		\label{eq:ineq-Lip}
 \| \nabla u^{\star} \|_{L^{\infty}(\mathbb{R} \times \omega;\R^{N})} \leq \| \nabla u \|_{L^{\infty}(\mathbb{R} \times \omega;\R^{N})}.
\end{equation}
\end{lemma}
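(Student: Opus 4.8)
The plan is to establish the gradient bound \eqref{eq:ineq-Lip} first, since $u^{\star}\in\mathcal X$ will then follow from it together with the (already noted) finiteness of the level sets $\Omega_{c}$, $c\in(-1,1)$. Put $L:=\|\nabla u\|_{L^{\infty}(\mathbb{R}\times\omega;\mathbb{R}^{N})}$; note $L>0$, for $L=0$ would force $u$ to be constant on the connected set $\mathbb{R}\times\omega$, contradicting the asymptotics in \eqref{spaceX}. Since $u^{\star}$ depends on $x$ only, $|\nabla u^{\star}|=|(u^{\star})'|$ a.e., so \eqref{eq:ineq-Lip} is exactly the assertion that the one-dimensional profile $x\mapsto u^{\star}(x)$ is $L$-Lipschitz on $\mathbb{R}$. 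Granting this, $u^{\star}\in W^{1,\infty}(\mathbb{R}\times\omega)$ with $|\nabla u^{\star}|\le L\le 1$ a.e.\ (using that $\omega$ is bounded and $|u^{\star}|\le 1$), and it remains only to check that $u^{\star}(x)\to\pm 1$ as $x\to\pm\infty$, which, $u^{\star}$ being independent of $y$, is automatically uniform in $y$.

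The heart of the matter is the estimate
\[
a_{u}(c')-a_{u}(c)\ \ge\ \frac{c'-c}{L}\qquad\text{for all }-1<c<c'<1.
\]
To prove it, fix $y\in\omega$ and set $u_{y}:=u(\cdot,y)$, an $L$-Lipschitz continuous function with $u_{y}(x)\to\pm 1$ as $x\to\pm\infty$. Let $x_{c}:=\sup\{x\in\mathbb{R}:u_{y}(x)\le c\}$, which is finite; by continuity $u_{y}(x_{c})=c$ and $u_{y}>c$ on $(x_{c},+\infty)$. As $u_{y}\to 1>c'$, the number $x_{c'}:=\inf\{x>x_{c}:u_{y}(x)\ge c'\}$ is finite, $u_{y}(x_{c'})=c'$, and $c<u_{y}<c'$ on $(x_{c},x_{c'})$. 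The Lipschitz bound gives $c'-c=u_{y}(x_{c'})-u_{y}(x_{c})\le L(x_{c'}-x_{c})$, hence
\[
m_{1}\big(\{x:c<u_{y}(x)<c'\}\big)\ \ge\ x_{c'}-x_{c}\ \ge\ \frac{c'-c}{L}.
\]
Integrating over $y\in\omega$ (Tonelli, $u$ being continuous; for $N=1$ there is nothing to integrate and $m_{0}(\omega)=1$ by convention) yields $m_{N}(\{c<u<c'\})\ge (c'-c)\,m_{N-1}(\omega)/L$. Since inspection of the definitions of $a_{u}$ and $\Omega_{c}$ gives $m_{N}(\{c<u\le c'\})=m_{N-1}(\omega)\,(a_{u}(c')-a_{u}(c))$ and $m_{N}(\{c<u<c'\})\le m_{N}(\{c<u\le c'\})$, the displayed estimate follows.

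It remains to transfer this to $u^{\star}$. From the construction of $u^{\star}$ as the generalized inverse of $a_{u}$ one verifies the identity
\[
\{x\in\mathbb{R}:u^{\star}(x)\le c\}=(-\infty,a_{u}(c)]\qquad\text{for every }c\in(-1,1),
\]
checking separately the cases $x\in I$, $x\in(x_{i},x_{s})\setminus I$, and $x\le x_{i}$ or $x\ge x_{s}$ (recall $x_{i}=\lim_{c\to-1^{+}}a_{u}(c)$ and $x_{s}=\lim_{c\to 1^{-}}a_{u}(c)$ whenever finite). Now take $x_{1}<x_{2}$; as $u^{\star}$ is nondecreasing we may assume $\gamma_{1}:=u^{\star}(x_{1})<u^{\star}(x_{2})=:\gamma_{2}$. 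The identity gives $x_{1}\le a_{u}(\gamma_{1})$ (or $x_{1}\le x_{i}$ if $\gamma_{1}=-1$) and $x_{2}>a_{u}(c)$ for every $c<\gamma_{2}$ (or $x_{2}\ge x_{s}$ if $\gamma_{2}=1$); combining these with the growth estimate and letting $c\uparrow\gamma_{2}$ yields $x_{2}-x_{1}\ge(\gamma_{2}-\gamma_{1})/L$, so the profile is $L$-Lipschitz. Finally, $u^{\star}$ is nondecreasing and valued in $[-1,1]$, hence admits limits $\ell_{\pm}$ at $\pm\infty$; if $\ell_{+}<1$ we could pick $c\in(\ell_{+},1)$ and get $\{u^{\star}\le c\}=\mathbb{R}$, contradicting $\{u^{\star}\le c\}=(-\infty,a_{u}(c)]$ since $a_{u}(c)=m_{N}(\Omega_{c})/m_{N-1}(\omega)<\infty$ for $c\in(-1,1)$; symmetrically $\ell_{-}=-1$ (else $\{u^{\star}\le c\}=\emptyset$ for $c\in(-1,\ell_{-})$). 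Thus $u^{\star}\in\mathcal X$ and \eqref{eq:ineq-Lip} holds.

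The step I expect to need the most care is the passage from the growth estimate on $a_{u}$ to the Lipschitz bound on $u^{\star}$: since $a_{u}$ may have countably many jumps, one must handle the generalized inverse and the two-sided sign conventions in the definitions of $a_{u}$ and $\Omega_{c}$ attentively, and the set identity $\{u^{\star}\le c\}=(-\infty,a_{u}(c)]$ is what makes this clean. By contrast, the slicing estimate is elementary once one observes that each slice $x\mapsto u(x,y)$ takes values near $-1$ for $x\to-\infty$ and near $1$ for $x\to+\infty$, hence by the intermediate value theorem crosses every level in $(-1,1)$.
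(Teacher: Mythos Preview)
Your proof is correct, and it takes a genuinely different route from the paper's. The paper proves \eqref{eq:ineq-Lip} by invoking Alberti's P\'olya--Szeg\H{o} inequality (Theorem~\ref{theoremAlberti}) with $g(t)=t^{p}$ to get $\|\nabla u^{\star}\|_{L^{p}}\le\|\nabla u\|_{L^{p}}$ for every $p\ge 2$, and then passes to the limit $p\to\infty$; the hypothesis $|\nabla u|\in L^{2}$ is used precisely to make the $L^{p}$ norms finite and uniformly bounded (via Lemma~\ref{lemmaLinfini}). Your argument bypasses this machinery entirely: you slice in $x$, use the intermediate value theorem and the Lipschitz bound on each trace $u_{y}$ to obtain the growth estimate $a_{u}(c')-a_{u}(c)\ge (c'-c)/L$, and then invert. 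This is more elementary and, notably, does not use the $L^{2}$ assumption at all --- so you actually prove a slightly stronger statement. The trade-off is that the paper's approach sets up Theorem~\ref{theoremAlberti} as a workhorse that is reused immediately afterwards (Theorem~\ref{theoremyoung} and Proposition~\ref{existenceofrearrangement}), whereas your direct argument, while cleaner for this lemma in isolation, does not feed into those subsequent results. One small remark: your slicing step uses that each $u_{y}$ is $L$-Lipschitz; this holds for every $y$ because segments parallel to the $x$-axis lie entirely in $\mathbb{R}\times\omega$, so the intrinsic and Euclidean distances coincide there and the $W^{1,\infty}$ bound transfers directly.
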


This result is a Pólya-Szegö type inequality. In order to prove this statement, we will use the following (standard) lemma whose proof is included for completeness. 

\begin{lemma}  \label{lemmaLinfini}
If $f\in L^{p}(\mathbb{R} \times \omega)$ for all $1 \leq p < +\infty$ and $\sup_{p} \| f\|_{L^{p}(\mathbb{R} \times \omega)} < +\infty$, then $f\in L^{\infty}(\mathbb{R} \times \omega)$.
\end{lemma}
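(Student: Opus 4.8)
The plan is to prove the sharper quantitative statement that if $M:=\sup_{p\ge 1}\norm{f}_{L^p(\mathbb{R}\times\omega)}<+\infty$ then $\norm{f}_{L^\infty(\mathbb{R}\times\omega)}\le M$; this of course gives $f\in L^\infty(\mathbb{R}\times\omega)$. The only ingredients needed are Chebyshev's inequality and the elementary fact that $t^{1/p}\to 1$ as $p\to+\infty$ for every fixed $t\in(0,+\infty)$.

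First I would fix an arbitrary $\lambda>M$ and look at the superlevel set $E_\lambda:=\{(x,y)\in\mathbb{R}\times\omega\mid |f(x,y)|>\lambda\}$. Since $f\in L^1(\mathbb{R}\times\omega)$, Chebyshev's inequality gives $m_N(E_\lambda)\le \lambda^{-1}\norm{f}_{L^1}<+\infty$. I would then argue by contradiction: if $m_N(E_\lambda)>0$, then for every $p\in[1,+\infty)$,
\[
M^p\ \ge\ \norm{f}_{L^p}^p\ =\ \int_{\mathbb{R}\times\omega}|f|^p\,\mathrm{d}\bar x\ \ge\ \int_{E_\lambda}|f|^p\,\mathrm{d}\bar x\ \ge\ \lambda^p\,m_N(E_\lambda),
\]
whence $M\ge \lambda\,m_N(E_\lambda)^{1/p}$ for all $p$. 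Letting $p\to+\infty$ and using $0<m_N(E_\lambda)<+\infty$ yields $M\ge\lambda$, contradicting $\lambda>M$. Hence $m_N(E_\lambda)=0$ for every $\lambda>M$, and letting $\lambda$ decrease to $M$ along a sequence I conclude $|f|\le M$ a.e., i.e.\ $f\in L^\infty(\mathbb{R}\times\omega)$ with $\norm{f}_{L^\infty}\le M$.

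I do not expect any real obstacle: the statement is a classical fact and the argument above is essentially complete. The only point deserving a line of care is guaranteeing that $m_N(E_\lambda)$ is finite before passing to the limit in $p$ — so that $m_N(E_\lambda)^{1/p}\to 1$ rather than staying infinite — which is exactly why one records $f\in L^1$ and invokes Chebyshev; equivalently, were $m_N(E_\lambda)=+\infty$ for some $\lambda>0$ one would already have $\norm{f}_{L^1}=+\infty$, against the hypotheses, so this degenerate case never arises.
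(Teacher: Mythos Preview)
Your proof is correct and follows essentially the same approach as the paper: both argue by contradiction on a superlevel set of positive measure, use the elementary lower bound $\norm{f}_{L^p}^p\ge \lambda^p\,m_N(E_\lambda)$, and send $p\to+\infty$. Your version is slightly sharper in that it yields the quantitative bound $\norm{f}_{L^\infty}\le M$ and explicitly secures $m_N(E_\lambda)<+\infty$ via Chebyshev before taking the limit, but the underlying idea is identical.
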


\begin{proof}
Assume by contradiction that $\| f \|_{L^{\infty}(\mathbb{R} \times \omega)} = +\infty$. Then for all $R > 0$, there exists a set $\Omega_{R} \subset \mathbb{R} \times \omega$ with positive Lebesgue measure such that $|f(x)| \geq R$ in $\Omega_{R}$. This implies that
\begin{align*}
\| f \| _{L^{p}(\mathbb{R} \times \omega)} \geq \left( \int_{\Omega_{R}} |f(x)|^{p} \right)^{1/p} \geq m_{N}(\Omega_{R})^{1/p} R.
\end{align*}
Taking the limit as $p \rightarrow + \infty$, we obtain 
\[
\liminf_{p \rightarrow +\infty} \| f \| _{L^{p}(\mathbb{R} \times \omega)} \geq R.
\]
Since $R>0$ is arbitrary, this contradicts the uniform boundedness of $f$ in the $L^{p}$-norm.
\end{proof}

We will also use the following consequence of the P\'{o}lya-Szeg\"o type inequality proved by Alberti \cite[Theorem 2.10]{Alberti2000}.

\begin{theorem} \label{theoremAlberti}
Assume $g : [0, + \infty) \rightarrow [0, + \infty)$ is convex, increasing and $g(0) = 0$. Then, for every $u \in \mathcal{X}$  {taking values in $[-1,1]$}, we have
\begin{equation*}
 \int_{\mathbb{R}\times \omega} g(|\nabla u^{\star}|)  \leq   \int_{\mathbb{R}\times \omega} g(|\nabla u |) .
\end{equation*}
Moreover, when the left-hand side is finite, equality holds if and only if there exists $a\in\R$ such that $u (x+a,y)= u^{\star}(x)$ for every $(x,y)\in\R\times\omega$.
\end{theorem}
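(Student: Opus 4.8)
The plan is to obtain Theorem~\ref{theoremAlberti} from the P\'olya--Szeg\"o inequality of Alberti \cite[Theorem~2.10]{Alberti2000}, by checking that both the integrand and the rearrangement $u^\star$ fit into its framework. If $\int_{\R\times\omega}g(|\nabla u|)=+\infty$ the inequality is vacuous, so one may assume the right-hand side is finite; note that the hypothesis $g(0)=0$ is precisely what makes the two integrals meaningful over the unbounded cylinder, and that convexity together with $g(0)=0$ forces $0\le g(t)\le t\,g(1)$ on $[0,1]$, so that $g$ is continuous at $0$. Set $\Phi:\R^N\to[0,+\infty)$, $\Phi(\xi):=g(|\xi|)$; since $\xi\mapsto|\xi|$ is convex and $g$ is convex and nondecreasing (being increasing), the composition $\Phi$ is convex, with $\Phi\ge0$ and $\Phi(0)=0$. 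These are exactly the structural requirements on the integrand under which Alberti's inequality holds.

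The substantive step is to recognise $u^\star$ as the monotone rearrangement of \cite{Alberti2000}: for $u\in\mathcal{X}$ with values in $[-1,1]$, the distribution function $a_u$ and the cylindrical level sets $\Omega^\star_c$ introduced above are precisely those defining the rearrangement that monotonises $u$ in the $x$-direction on $\R\times\omega$, so that $u^\star$ depends on $x$ only and $|\nabla u^\star|=|{u^\star}'|$ a.e. If Alberti's theorem is stated for a class slightly narrower than $\mathcal{X}$ (for instance, functions constant in $x$ outside a bounded slab), I would first approximate: using that $u(x,y)\to\pm1$ uniformly in $y$, for large $R$ let $u_R$ agree with $u$ on $[-R,R]\times\omega$ and interpolate affinely in $x$ to the constants $\pm1$ on $[\pm R,\pm(R+1)]\times\omega$. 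Then $u_R\in\mathcal{X}$ with gradient supported in a bounded slab, $u_R^\star=u^\star$ on an exhausting family of intervals, and $\int_{\R\times\omega}g(|\nabla u_R|)\to\int_{\R\times\omega}g(|\nabla u|)$ by monotone convergence, using the continuity of $g$ at $0$. Applying \cite[Theorem~2.10]{Alberti2000} to $u_R$ and letting $R\to\infty$ --- monotone convergence on the rearranged side, convergence of the energies on the other --- yields $\int_{\R\times\omega}g(|\nabla u^\star|)\le\int_{\R\times\omega}g(|\nabla u|)$.

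For the equality statement, assume the left-hand side is finite and equality holds; then the equality characterisation in \cite[Theorem~2.10]{Alberti2000} applies and forces $u$ to coincide, up to a translation, with its monotone rearrangement, and since that rearrangement acts only in the $x$-variable and the level sets of $u^\star$ are whole cylinders, the translation may be taken in $x$ alone, that is, $u(x+a,y)=u^\star(x)$ for some $a\in\R$ and all $(x,y)\in\R\times\omega$. The converse is immediate, since a translate of $u^\star$ has $|\nabla u|$ equimeasurable with $|\nabla u^\star|$. I expect the only genuinely delicate point to be the faithful reduction of the present setting --- an unbounded cylinder, with functions tending to $\pm1$ rather than decaying or compactly perturbed --- to the exact hypotheses of Alberti's theorem: namely the approximation argument above and the stability under it of both the rearrangement and the equality case. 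The convexity verification and the remaining bookkeeping are routine.
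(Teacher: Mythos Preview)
The paper does not give its own proof of this theorem: it simply states it as ``the following consequence of the P\'olya--Szeg\"o type inequality proved by Alberti \cite[Theorem~2.10]{Alberti2000}'' and moves on. Your proposal --- to verify that $\Phi(\xi)=g(|\xi|)$ is a convex integrand vanishing at the origin and that the rearrangement $u^\star$ defined in the paper is exactly Alberti's monotone rearrangement on $\R\times\omega$, then invoke \cite[Theorem~2.10]{Alberti2000} --- is precisely the intended argument, spelled out in more detail than the paper bothers to give.

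One small caution on your approximation step: the claim ``$u_R^\star=u^\star$ on an exhausting family of intervals'' is not literally correct, since modifying $u$ near $\pm\infty$ perturbs the distribution function $a_u$ globally (albeit by a vanishing amount as $R\to\infty$). What you actually get is that $a_{u_R}\to a_u$ pointwise and hence $u_R^\star\to u^\star$ locally uniformly, together with $(u_R^\star)'\to (u^\star)'$ in a sense strong enough to pass to the limit under $\int g(|(u_R^\star)'|)$ via Fatou. This is a routine fix, and in any case Alberti's framework in \cite{Alberti2000} is already set up on $\R\times\omega$ with the asymptotic conditions built in, so the approximation may well be unnecessary. Either way, your approach matches the paper's.
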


The proof of Lemma \ref{lemmalipschitz} is now a consequence of Theorem \ref{theoremAlberti} and Lemma \ref{lemmaLinfini}.

\begin{proof}[Proof of Lemma \ref{lemmalipschitz}]

First, observe that since $\nabla u : \mathbb{R} \times \omega \rightarrow \mathbb{R}^{N}$ is square integrable and $|\nabla u|$ is uniformly bounded by $1$, we have
\begin{align*}
 \| \nabla u \|_{L^{p}(\mathbb{R} \times \omega; \mathbb{R}^N)}  =  \left( \int_{\mathbb{R} \times \omega} | \nabla u |^{p} \right)^{1/p} = \left( \int_{\mathbb{R} \times \omega} | \nabla u |^{2} | \nabla u |^{p-2} \right)^{1/p} \leq \left( \int_{\mathbb{R} \times \omega} |\nabla u |^{2} \right)^{1/p} \leq M,
\end{align*}
where $M$ is independent of $p$. This  shows that $| \nabla u |\in L^{p}(\mathbb{R} \times \omega)$ for all $2  \leq p < +\infty$ and also that $\sup_{p \geq 2} \| \nabla u \|_{L^{p}(\mathbb{R} \times \omega;\R^{N})} < +\infty$. Using Theorem \ref{theoremAlberti} with $g(t) = t^{p}$, for $p\ge 2$, we infer that 
\begin{align*}
\| \nabla u^{\star} \|_{L^{p}(\mathbb{R} \times \omega;\R^{N})} \leq \| \nabla u \|_{L^{p}(\mathbb{R} \times \omega;\R^{N})}
\end{align*}
which implies that $|\nabla u^{\star}|\in L^{p}(\mathbb{R} \times \omega)$ for all $2 \leq p < +\infty$ and gives an uniform bound as well. Lemma \ref{lemmaLinfini} allows to conclude that $|\nabla u^{\star}|\in L^{\infty}(\mathbb{R} \times \omega)$. To prove that $u^{\star}\in \mathcal X$, we first claim that
\begin{equation*}
\limsup_{p \rightarrow +\infty} \| \nabla u^{\star} \|_{L^{p}(\mathbb{R} \times \omega;\R^{N})} \leq \| \nabla u \|_{L^{\infty}(\mathbb{R} \times \omega;\R^{N})}.
\end{equation*}
Indeed, using again Theorem \ref{theoremAlberti} with $g(t) = t^{p}$, we deduce that 
\begin{align*}
\| \nabla u^{\star} \|_{L^{p}(\mathbb{R} \times \omega;\R^{N})} \leq \| \nabla u \|_{L^{p}(\mathbb{R} \times \omega;\R^{N})} \leq \| \nabla u \|_{L^{\infty}(\mathbb{R} \times \omega;\R^{N})}^{\frac{p-2}{p}} \left( \int_{\mathbb{R} \times \omega} | \nabla u |^{2} \right)^{1/p},
\end{align*}
so that our claim easily follows by taking the limit as $p \rightarrow +\infty$ in both sides of the inequality.

To establish \eqref{eq:ineq-Lip}, it is enough to prove that
\begin{equation*} 
 \| \nabla u^{\star} \|_{L^{\infty}(\mathbb{R} \times \omega;\R^{N})}	\leq	\liminf_{p \rightarrow +\infty} \| \nabla u^{\star} \|_{L^{p}(\mathbb{R} \times \omega;\R^{N})} .
\end{equation*}
Fix $\varepsilon > 0$ and define $\Omega_{\varepsilon} \subset \mathbb{R} \times \omega$ with finite positive measure such that
\begin{align*}
|\nabla u^{\star}(x)| \geq (1 - \varepsilon) \| \nabla u^{\star} \|_{L^{\infty}(\mathbb{R} \times \omega;\R^{N})} \quad \mbox{ for all } x \in \Omega_{\varepsilon}.
\end{align*}
Then
\begin{align*}
\| \nabla u^{\star} \|_{L^{p}(\mathbb{R} \times \omega;\R^{N})} & \geq (1 - \varepsilon) \left( \int_{\Omega_{\varepsilon}} \| \nabla u^{\star} \|_{L^{\infty}(\mathbb{R} \times \omega;\R^{N})}^{p}\right)^{1/p} = (1 - \varepsilon) m_{N}(\Omega_{\varepsilon})^{1/p} \| \nabla u^{\star} \|_{L^{\infty}(\mathbb{R} \times \omega;\R^{N})}.
\end{align*}
Taking the limit as $p \rightarrow +\infty$, we obtain
\begin{equation*}
\liminf_{p \rightarrow +\infty} \| \nabla u^{\star} \|_{L^{p}(\mathbb{R} \times \omega;\R^{N})}  \geq  (1 - \varepsilon) \| \nabla u^{\star} \|_{L^{\infty}(\mathbb{R} \times \omega;\R^{N})} 
\end{equation*}
and since $\varepsilon$ is arbitrary, the conclusion follows.  

As an immediate consequence of \eqref{eq:ineq-Lip}, we deduce  that $|\nabla u^{\star}|$ is uniformly bounded by $1$
and therefore $u^\star \in \calX$,  as $\lim_{x \to \pm \infty} u^\star(x) = \pm1$  by construction.
\end{proof}

\begin{remark}
Although  the uniform square-integrability of $|\nabla u|$ in Lemma~\ref{lemmalipschitz} seams a restrictive assumption at first, it will be naturally satisfied by the elements of any minimizing sequence for the functional $\mathcal{J}$ as it follows from the boundedness of the energy and inequality \eqref{quadraticEstimates}.
\end{remark}

In the following theorem, we  prove that the volume integral  $K(u)$, defined in \eqref{eq:vol-int},  does not increase under the monotone rearrangement.

\begin{theorem} \label{theoremyoung}
For any $u \in \mathcal{X}$, {taking values in $[-1,1]$ and} such that $|\nabla u|\in L^{2}(\mathbb{R} \times \omega)$, we have
\begin{equation} \label{desyoung}
  \int_{\mathbb{R}\times \omega} \left( 1 - \sqrt{1 - |\nabla u^{\star} |^{2}} \right)  \leq  \int_{\mathbb{R}\times \omega} \left( 1 - \sqrt{1 - |\nabla u |^{2}} \right).
\end{equation}
Moreover, if $\| \nabla u \|_{L^{\infty}(\mathbb{R} \times \omega; \mathbb{R}^N)}<1$, we have equality if and only if there exists $a\in\R$ such that $u (x+a,y)= u^{\star}(x)$ for every $(x,y)\in\R\times\omega$.
\end{theorem}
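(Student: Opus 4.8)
The plan is to deduce Theorem \ref{theoremyoung} from the already-established P\'olya--Szeg\"o type inequality of Theorem \ref{theoremAlberti} by applying it to the convex function associated with the volume integrand. Concretely, set
\[
g(t) = 1 - \sqrt{1-t^2} \quad \text{for } t\in[0,1], \qquad g(t)=+\infty \text{ for } t>1,
\]
or rather, since Theorem \ref{theoremAlberti} is stated for $g:[0,+\infty)\to[0,+\infty)$, work on the relevant range: for $u\in\calX$ taking values in $[-1,1]$ we have $|\nabla u|\le 1$ a.e.\ and, by Lemma \ref{lemmalipschitz}, also $|\nabla u^\star|\le 1$ a.e., so only values $t\in[0,1]$ of $g$ are ever used. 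One checks directly that $g(0)=0$, that $g$ is increasing on $[0,1]$, and that $g''(t) = (1-t^2)^{-3/2} > 0$, so $g$ is convex there; extending $g$ affinely (or by $+\infty$) past $t=1$ keeps it convex and increasing, and the integrals are unaffected. Applying Theorem \ref{theoremAlberti} then yields exactly \eqref{desyoung}.

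The order of the steps is thus: (i) invoke Lemma \ref{lemmalipschitz} to ensure $u^\star\in\calX$ and $\|\nabla u^\star\|_{L^\infty}\le 1$, so that $g$ is only evaluated on $[0,1]$ where it is well-behaved; (ii) verify the three hypotheses on $g$ (value at $0$, monotonicity, convexity) by the elementary computation of $g'$ and $g''$; (iii) quote Theorem \ref{theoremAlberti} to obtain the inequality \eqref{desyoung}. For the equality statement, assume in addition $\|\nabla u\|_{L^\infty}<1$. Then $\int_{\R\times\omega} g(|\nabla u|) \le \int_{\R\times\omega}|\nabla u|^2 < +\infty$ by \eqref{quadraticEstimates} and the uniform square-integrability hypothesis, so the left-hand side of Theorem \ref{theoremAlberti}'s inequality is finite; moreover on the compact range $[0,\|\nabla u\|_{L^\infty}]\subset[0,1)$ the function $g$ is \emph{strictly} convex, which is what the Alberti equality case requires. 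Hence equality in \eqref{desyoung} forces $u(x+a,y)=u^\star(x)$ for some $a\in\R$, and conversely such a translate trivially gives equality since $|\nabla u|$ and $|\nabla u^\star|$ then have the same distribution.

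I do not expect a serious obstacle here; this is a clean corollary of Theorem \ref{theoremAlberti}. The only mildly delicate points are bookkeeping ones: making sure that the singularity of $g$ at $t=1$ never interferes (resolved by Lemma \ref{lemmalipschitz}, which confines all gradients to $[0,1]$), and making sure the equality case of Theorem \ref{theoremAlberti} genuinely applies --- that requires both finiteness of $\int g(|\nabla u^\star|)$ and that strict convexity of $g$ be available on the range actually attained, which is exactly why the hypothesis $\|\nabla u\|_{L^\infty}<1$ is imposed in the equality part. If one wanted to avoid quoting Alberti's equality characterization verbatim, one could instead argue the equality case directly: equality in \eqref{desyoung} together with the layer-cake / coarea representation of both sides would force the perimeter of almost every superlevel set of $u$ to equal that of the corresponding cylinder $\{x\le a_u(c)\}\times\omega$, i.e.\ each superlevel set is (up to null sets) such a cylinder, whence $u=u^\star$ up to translation; but invoking Theorem \ref{theoremAlberti} is the shortest route.
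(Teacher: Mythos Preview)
Your proposal has the right overall strategy --- deduce the inequality from Theorem \ref{theoremAlberti} applied to $g(t)=1-\sqrt{1-t^2}$ --- but there is a genuine technical gap in step (ii). You write that ``extending $g$ affinely (or by $+\infty$) past $t=1$ keeps it convex and increasing''. The affine extension does \emph{not} keep $g$ convex: since $g'(t)=t(1-t^2)^{-1/2}\to+\infty$ as $t\to 1^-$, no affine piece attached at $t=1$ can lie above all the tangent lines on $[0,1)$. The $+\infty$ extension is convex in the extended-real sense, but Theorem \ref{theoremAlberti} is stated for $g:[0,+\infty)\to[0,+\infty)$, i.e.\ finite-valued $g$, so you cannot quote it directly. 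This is precisely the obstacle the paper flags just before Definition \ref{gtronqué}.

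The paper's fix is to replace $g$ by a family $\Psi_n$ of $C^2$ functions that agree with $g$ on $[0,\sqrt{1-1/n^2}]$ and are extended by a (finite) quadratic beyond, so that each $\Psi_n$ is a bona fide convex increasing function on all of $[0,+\infty)$. Theorem \ref{theoremAlberti} then applies to each $\Psi_n$, and one passes to the limit $n\to+\infty$ using dominated convergence (the $L^2$ bound on $|\nabla u|$ provides the dominating function). For the equality statement under $\|\nabla u\|_{L^\infty}<1$, your argument and the paper's coincide: for $n$ large enough $\Psi_n(|\nabla u|)=g(|\nabla u|)$ everywhere, so Theorem \ref{theoremAlberti} applies verbatim with its equality clause. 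In short, your outline is correct once you replace the (impossible) affine/$+\infty$ extension by a truncation-and-limit argument.
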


To prove this theorem, one would like to rely on Theorem \ref{theoremAlberti} with the function $g(t) = 1 - \sqrt{1 - t^{2}}$. However, this function  is not defined in the whole interval $[0, +\infty)$ and cannot be extended to this interval as a convex function, since at $t = 1$, its derivative is not finite. Anyway, we  can overcome this problem arguing by truncation close to $t = 1$.

\begin{definition} \label{gtronqué}
For all $n\in\bbN_0$, we set $h_{n}, \Psi_{n} : [0, + \infty) \rightarrow [0, + \infty)$ such that
\begin{align*}
h_n (t) = \left\{ \begin{aligned}
& 1 - \sqrt{1 - t}  \quad  & \text{if }\ t \leq 1 -  \frac{1}{n^2}, \\
& a_n + b_n ( t - 1 + \frac{1}{n^2} ) + c_n ( t - 1 + \frac{1}{n^2} )^{2}  \quad  & \text{if }\ t > 1 -  \frac{1}{n^2},
\end{aligned} \right. 
\end{align*}
and
\[
\Psi_n (t) = h_n (t^2),
\]
where the coefficients $a_n$, $b_n$ and $c_n$ are chosen in such a way that $\Psi_{n}$ is of class $C^{2}$. In particular, $a_n = 1-\frac{1}{n}$, $b_n = \frac{n}{2}$ and $c_n = \frac{n^3}{8}$.
\end{definition}

\begin{proof}[Proof of Theorem \ref{theoremyoung}]

The auxiliary function $\Psi_{n}$ satisfies the hypothesis in Theorem \ref{theoremAlberti}, therefore the following inequality holds
\begin{equation}\label{ineq:gepsilon}
  \int_{\mathbb{R} \times \omega} \Psi_{n}(|\nabla u^{\star}|)  \leq   \int_{\mathbb{R} \times \omega} \Psi_{n}(|\nabla u |)
\end{equation}
for all $u \in\calX$ taking values in $[-1,1]$ and all $n\in\bbN_0$. The square integrability of $ |\nabla u|$ implies that the set $A:=\{ (x,y) \in \bbR\times\omega \mid \abs{\nabla u(x,y)}^2\ge1/2 \}$ has finite measure.
It also implies that there is a function  $h\in L^1(\bbR\times\omega)$  given by
\begin{equation*}
h (x,y) = 
\left\{ 
\begin{array}{ll} 
1 \quad & \text{if }\ (x,y)\in A, \\
\abs{\nabla u(x,y)}^2  \quad & \text{if }\  (x,y)\not\in A,
\end{array}
\right. 
\end{equation*}
such that 
\[
\Psi_n(|\nabla u (x,y)|)\le h(x,y)  \quad \text{ for all } n\ge 2.
\]	
Applying Lebesgue's dominated convergence, we can take the limit as $n\to +\infty$ in the right-hand side of \eqref{ineq:gepsilon}. Arguing similarly for the left-hand side, we deduce \eqref{desyoung}.

\medbreak

When $\| \nabla u \|_{L^{\infty}(\mathbb{R} \times \omega;\mathbb{R}^N)} < 1$,
 we can choose  $n$ large enough so that  
\[
\Psi_n(|\nabla u(x,y)|) = 1 - \sqrt{1 - |\nabla u(x,y)|^2}  \quad \text{ for all } (x,y)\in\bbR\times\omega
\]
and the conclusion follows immediately from Theorem \ref{theoremAlberti}. In addition,  in this case, since the right-hand side of (\ref{desyoung}) is finite, we also deduce that equality holds in (\ref{desyoung})
 if and only if there exists $a\in\R$ such that $u (x+a,y)= u^{\star}(x)$ for every $(x,y)\in\R\times\omega$.
\end{proof}

We can now deduce the following proposition.

\begin{proposition}  \label{existenceofrearrangement}
For all $u \in \mathcal{X}$, taking values in $[-1,1]$, and satisfying $|\nabla u|\in L^{2}(\mathbb{R} \times \omega)$, there exists $u^{\star} \in \mathcal{X}$ (depending only on $x \in \mathbb{R}$) such that 
\[
\mathcal{J}(u^{\star}) \leq \mathcal{J}(u).
\]
Moreover, if $\| \nabla u \|_{L^{\infty}(\mathbb{R} \times \omega;\mathbb{R}^N)}<1$, we have equality if and only if there exists $a\in\R$ such that $u (x+a,y)= u^{\star}(x)$ for every $(x,y)\in\R\times\omega$.
\end{proposition}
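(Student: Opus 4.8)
The plan is to combine Theorem~\ref{theoremyoung} (the rearrangement does not increase the volume integral) with the equimeasurability of the monotone rearrangement (which leaves the potential term unchanged). First I would reduce to the case $\mathcal{J}(u) < +\infty$, since otherwise there is nothing to prove, and invoke Lemma~\ref{lemmalipschitz} to know that the rearrangement $u^\star$ already constructed above lies in $\mathcal{X}$, depends only on $x$, and still takes values in $[-1,1]$. Then I would split $\mathcal{J} = K + \Phi$, with $K$ the volume integral \eqref{eq:vol-int} and $\Phi(v) := \int_{\mathbb{R}\times\omega} W(v)\,\mathrm{d}\bar x$, so that the inequality $K(u^\star) \le K(u)$ is precisely the content of Theorem~\ref{theoremyoung} and the whole matter reduces to checking $\Phi(u^\star) = \Phi(u)$.

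For the potential term, I would argue that $u$ and $u^\star$ are equimeasurable on $(-1,1)$: directly from the construction of $u^\star$ via the generalized inverse of the distribution function $a_u$, one obtains $m_N(\{u \in B\}) = m_N(\{u^\star \in B\})$ for every Borel $B \subseteq (-1,1)$ — in particular, each value $c$ with a positive-measure level set (a jump of $a_u$) is attained by $u^\star$ on a cylinder of exactly the same measure. Since $W(\pm 1) = 0$ by $(W_2)$, the possibly infinite-measure sets $\{u = \pm 1\}$ and $\{u^\star = \pm 1\}$ contribute nothing, so by the change-of-variables formula for the common pushforward measure $\mu$ on $(-1,1)$, applied to the nonnegative Borel function $W$,
\[
\Phi(u) = \int_{(-1,1)} W(c)\,\mathrm{d}\mu(c) = \Phi(u^\star).
\]
Assembling the two pieces, $\mathcal{J}(u^\star) = K(u^\star) + \Phi(u^\star) \le K(u) + \Phi(u) = \mathcal{J}(u)$.

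For the equality statement, under $\|\nabla u\|_{L^\infty(\mathbb{R}\times\omega;\mathbb{R}^N)} < 1$ one has $K(u) \le \int |\nabla u|^2 < +\infty$ by \eqref{quadraticEstimates}, and since $\Phi(u^\star) = \Phi(u)$ holds unconditionally, the identity $\mathcal{J}(u^\star) = \mathcal{J}(u)$ is equivalent to $K(u^\star) = K(u)$; I would then quote the equality case of Theorem~\ref{theoremyoung} for the forward implication and note the reverse one from the translation invariance of $\mathcal{J}$ in the $x$ variable. I expect the only genuinely fiddly point of the whole argument to be the verification of the equimeasurability claim in the presence of jump discontinuities of $a_u$, which is nevertheless a routine check from the explicit definition of $u^\star$.
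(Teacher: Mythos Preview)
Your proposal is correct and follows essentially the same route as the paper: the paper's proof simply combines Theorem~\ref{theoremyoung} for the volume integral with Cavalieri's principle (cited from \cite[Theorem~2.6]{Alberti2000}) for the potential term, which is exactly the equimeasurability argument you spell out. Your version is more explicit about why $\Phi(u^\star)=\Phi(u)$ and about handling the sets $\{u=\pm1\}$ via $W(\pm1)=0$, but there is no substantive difference in strategy.
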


\begin{proof}
Let $u\in\calX$ taking values in $[-1,1]$ and $u^\star\in\calX$ be its monotone rearrangement as defined above. 
Combining Theorem \ref{theoremyoung} and Cavalieri's principle, see for example \cite[Theorem 2.6]{Alberti2000}, we obtain the statement of the theorem.
\end{proof}


\subsection{Existence of a one-dimensional  minimizer}	
\label{subsection:1Dminimizer}


Let us first  notice that the functional $\mathcal{J}$ is bounded from below in $\calX$, as $\mathcal{J}(u)\ge0$ for all $u \in \calX$.

\begin{lemma} \label{existence1Dseq}
There exists a minimizing sequence $\left( u_n \right)_n\subset \calX$ satisfying $u_n(x,y)=u_n(x)$ for all $(x,y)\in \bbR\times\omega$ and all $n\in\bbN_0$. Moreover for every $n\in\N_0$, $u_n$ is a nondecreasing function taking values in $[-1,1]$ and such that $u_n(0)=0$.
\end{lemma}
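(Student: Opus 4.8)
The plan is to start from an arbitrary minimizing sequence, first normalize its terms so that they take values in $[-1,1]$ by a truncation that does not increase $\mathcal{J}$ and keeps us inside $\calX$, then replace each term by its monotone rearrangement using Proposition~\ref{existenceofrearrangement}. Since $\mathcal{J}\ge 0$ on $\calX$ and, for instance, any fixed smooth nondecreasing function $\varphi\colon\R\to[-1,1]$ with $\varphi\equiv\pm1$ near $\pm\infty$ and $\norm{\varphi'}_{L^\infty}<1$ belongs to $\calX$ with $\mathcal{J}(\varphi)<+\infty$, we have $\inf_{\calX}\mathcal{J}\in[0,+\infty)$; hence there is a sequence $(v_n)_n\subset\calX$ with $\mathcal{J}(v_n)\to\inf_{\calX}\mathcal{J}$, and for $n$ large $\mathcal{J}(v_n)\le \inf_{\calX}\mathcal{J}+1<+\infty$.

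\textbf{Truncation.} Let $T(s)=\min\{1,\max\{-1,s\}\}$ and consider $T\circ v_n$. Since $T$ is $1$-Lipschitz, $T\circ v_n\in W^{1,\infty}(\R\times\omega)$ with $|\nabla(T\circ v_n)|\le|\nabla v_n|\le 1$ a.e.; the asymptotic boundary conditions pass to the limit since $T(\pm1)=\pm1$, so $T\circ v_n\in\calX$ and takes values in $[-1,1]$. As $t\mapsto 1-\sqrt{1-t^2}$ is nondecreasing on $[0,1]$ we get $K(T\circ v_n)\le K(v_n)$, while $W\ge0$ with $W(\pm1)=0$ gives $W(T\circ v_n)\le W(v_n)$ pointwise; hence $\mathcal{J}(T\circ v_n)\le\mathcal{J}(v_n)$. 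Therefore we may assume from the start that each $v_n$ takes values in $[-1,1]$.

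\textbf{Rearrangement.} From \eqref{quadraticEstimates}, $\tfrac12\int_{\R\times\omega}|\nabla v_n|^2\le K(v_n)\le\mathcal{J}(v_n)\le C$ for $n$ large, so $|\nabla v_n|\in L^2(\R\times\omega)$ and $v_n$ satisfies the hypotheses of Proposition~\ref{existenceofrearrangement}. Set $u_n:=v_n^{\star}$. Then $u_n\in\calX$ depends only on $x$, and $\mathcal{J}(u_n)\le\mathcal{J}(v_n)$, so $(u_n)_n$ is again a minimizing sequence; by the construction of the monotone rearrangement recalled before Lemma~\ref{lemmalipschitz}, $u_n$ is nondecreasing, takes values in $[-1,1]$, and $u_n(0)=0$. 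This proves the lemma.

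\textbf{Main obstacle.} There is no deep difficulty: the only points requiring care are (i) checking that $T\circ v_n$ retains the boundary conditions uniformly in $y$ and that truncation increases neither term of $\mathcal{J}$, and (ii) deriving the uniform $L^2$-bound on $|\nabla v_n|$ from the boundedness of the energy, so that the Pólya--Szegő-type results of the previous subsection genuinely apply.
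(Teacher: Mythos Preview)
Your argument is correct and follows essentially the same route as the paper: truncate an arbitrary minimizing sequence into $[-1,1]$ (the paper writes this as $u_n=\sup(-1,\inf(v_n,1))$), then apply Proposition~\ref{existenceofrearrangement}. You simply spell out more of the details the paper leaves implicit, in particular the $L^2$-bound on the gradient needed to invoke the rearrangement proposition and the verification that truncation decreases both the kinetic and potential parts of $\mathcal{J}$.
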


\begin{proof}
Let $\left( v_n \right)_n\subset \calX$ be a minimizing sequence of $\mathcal{J}$, that is,
$\mathcal{J}(v_n) \to \inf_{\calX} \mathcal{J}$ as $n\to +\infty$.
Defining $u_n=\sup (-1,\inf(v_n,1))$ and observing that $\mathcal{J}(u_n)\le \mathcal{J}(v_n)$, we have $-1\le u_n \le 1$ for all $n\in\bbN_0$. The lemma is now an obvious consequence of Proposition \ref{existenceofrearrangement}.
\end{proof}

Consequently, for the minimizing sequence of Lemma \ref{existence1Dseq}, we have $\mathcal{J}(u_n)=m_{N-1}(\omega) \mathcal{J}_1(u_n)$, where
\begin{equation*}
\mathcal{J}_1(u) = \int_{- \infty}^{+ \infty} \left( 1 - \sqrt{1 - \abs{u^{\prime}(t)}^{2}} + W(u(t)) \right) \mathrm{d}t.
\end{equation*}

We will prove that this functional attains its minimum in the corresponding one-dimensional space $\mathcal{X}_1$, defined in \eqref{spaceX1D}.

\begin{proposition}	\label{prop:existence1Dmin}
$\mathcal{J}_1$ attains its minimum in $\calX_1$.
\end{proposition}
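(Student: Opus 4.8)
The plan is to run the direct method of the calculus of variations on $\mathcal{J}_1$ over $\calX_1$, exploiting the one-dimensionality and the quadratic lower bound in \eqref{quadraticEstimates}. First I would take a minimizing sequence $(u_n)_n\subset\calX_1$ of the type provided by Lemma~\ref{existence1Dseq}, so that each $u_n$ is nondecreasing, takes values in $[-1,1]$, satisfies $u_n(0)=0$ and $\norm{u_n'}_{L^\infty}\le 1$, and $\mathcal{J}_1(u_n)\to m:=\inf_{\calX_1}\mathcal{J}_1<+\infty$. From $\frac12|u_n'|^2\le 1-\sqrt{1-|u_n'|^2}$ we get a uniform bound on $u_n'$ in $L^2(\mathbb{R})$, and the pointwise bound $\norm{u_n'}_{L^\infty}\le 1$ together with $u_n(0)=0$ gives local uniform boundedness of $(u_n)$. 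Hence, up to a subsequence, $u_n\rightharpoonup u$ weakly in $H^1_{\text{loc}}(\mathbb{R})$ and $u_n\to u$ locally uniformly, with $u$ nondecreasing, $u(0)=0$, $u$ valued in $[-1,1]$ and (by weak lower semicontinuity of the $L^\infty$ norm under local uniform convergence, or by the $L^2$ bound on every interval) $\norm{u'}_{L^\infty}\le 1$.

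The next step is lower semicontinuity of $\mathcal{J}_1$ along this sequence. On any bounded interval $[-R,R]$, the integrand $(p,s)\mapsto 1-\sqrt{1-|p|^2}+W(s)$ is convex in $p$ (by uniform convexity of $1-\sqrt{1-|p|^2}$) and continuous in $s$, and $W\ge 0$; so by the standard weak lower semicontinuity theorem for convex integrands,
\[
\int_{-R}^{R}\left(1-\sqrt{1-|u'|^2}+W(u)\right)\mathrm{d}t\ \le\ \liminf_{n\to\infty}\int_{-R}^{R}\left(1-\sqrt{1-|u_n'|^2}+W(u_n)\right)\mathrm{d}t\ \le\ m.
\]
Letting $R\to+\infty$ by monotone convergence yields $\mathcal{J}_1(u)\le m$, hence $\mathcal{J}_1(u)=m$ provided we know $u\in\calX_1$.

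The only remaining point — and the one I expect to be the main obstacle — is to show that $u$ actually attains the correct limits $\lim_{t\to\pm\infty}u(t)=\pm1$, i.e. that no mass of the transition escapes to infinity. Since $u$ is nondecreasing and valued in $[-1,1]$, the limits $\ell_\pm:=\lim_{t\to\pm\infty}u(t)$ exist with $-1\le\ell_-\le 0\le\ell_+\le 1$; I must rule out $\ell_-> -1$ or $\ell_+<1$. The key is that $\mathcal{J}_1(u)<+\infty$ forces $\int_{\mathbb{R}}W(u)<+\infty$: if, say, $\ell_+<1$, then $u(t)$ stays in a region where $W$ is bounded below by a positive constant (by $(W_2)$, $W>0$ on $[\ell_+,1)$ away from $1$, and since $u$ is monotone with limit $\ell_+<1$ the values $u(t)$ for large $t$ lie in a compact subset of $(-1,1)$ on which $\inf W>0$), contradicting integrability of $W(u)$ on $(0,+\infty)$. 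Symmetrically at $-\infty$. Hence $\ell_\pm=\pm1$ and $u\in\calX_1$, which completes the proof. A short alternative for this last step is to compare $\mathcal{J}_1(u)$ directly with an explicit competitor in $\calX_1$ (e.g. a truncated affine connection) to get $m<+\infty$ and then argue as above; either way the heart of the matter is converting finiteness of the potential term into control of the behavior at infinity for a monotone function.
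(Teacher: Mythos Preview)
Your proof is correct and follows essentially the same route as the paper: a minimizing sequence from Lemma~\ref{existence1Dseq}, the $L^2$ bound on derivatives via \eqref{quadraticEstimates}, local uniform convergence plus weak $L^2$ convergence of derivatives, and lower semicontinuity of the convex kinetic term combined with Fatou for the potential. The only minor variation is in the last step: the paper tracks the transition points $t_n$ (where $u_n(t_n)=1-\varepsilon$) and shows they are bounded via $t_n\min_{[0,1-\varepsilon]}W\le \mathcal{J}_1(u_n)$, whereas you argue directly on the limit $u$ using $\int_{\mathbb R}W(u)<\infty$ and monotonicity --- but this is the same observation applied at a slightly different stage.
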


\begin{proof}
Consider a minimizing sequence $\left( u_n \right)_n\subset \calX_1$ as given by Lemma \ref{existence1Dseq}.
Observe that for each $\varepsilon>0$, there is $t_n>0$ such that $u_n(t_n) = 1-\varepsilon$, $0\le u_n(t) \le 1-\varepsilon$ for all $t\in  [0,t_n]$ and $u_n(t)>1-\varepsilon$ for all $t\in\ (t_n,+\infty)$. The sequence ${(t_n)}_n$ is bounded, since
\begin{equation*}
  t_n \cdot \min_{[0,1-\varepsilon]} W   \le \mathcal{J}(u_n) \le  C,
\end{equation*}
for some positive constant $C$ independent from $n$. 
A similar argument shows that there exists a bounded sequence ${(s_n)}_n$ such that
$s_n<0$, $u_n(s_n) = -1+\varepsilon$, $-1+\varepsilon\le u_n(t) \le 0$ for all $t\in  [s_n,0]$ and $u_n(t)<-1+\varepsilon$ for all $t\in\ (-\infty,s_n)$. 

From \eqref{quadraticEstimates}, it follows that $\sup_n\norm{u_n^\prime}_{L^2(\mathbb{R})}$ is finite. In addition, since $\norm{u_n^\prime}_{L^\infty(\mathbb{R})} \le 1$ and
\begin{equation*}
\abs{u_n(t_2)-u_n(t_1)} = \abs{\int_{t_1}^{t_2} u_n^\prime(s)\,ds} \le \abs{t_2-t_1}, 	\qquad\forall\ n\in\bbN_0, \ \forall\ t_1,t_2\in\bbR,
\end{equation*}
the sequence ${(u_n)}_n$ is equi-continuous.
Therefore, by Ascoli-Arzel\`a Theorem and the uniform boundedness of ${(u'_n)}_n$ in $L^2\cap L^\infty$, there exists $u\in W^{1,\infty}(\bbR)$ such that, up to a subsequence,
\begin{gather*}
u_n  \longrightarrow u  \text{ uniformly on every compact set } K\subset \bbR,\\
u_n^\prime \rightharpoonup u^\prime  \text{ in } L^2(\bbR),
\end{gather*}
as $n \to + \infty$. Moreover, from the uniform convergence we infer  that $u$ is nondecreasing, $u(t)\in [-1,1]$ for all $t\in\bbR$ and $u(0)=0$. Considering another subsequence if necessary, we have from the boundedness of ${(t_n)}_n$ and ${(s_n)}_n$ that $t_n\to \overline{t} >0$  and  $s_n\to \overline{s} <0$, and $u(t) > 1-\varepsilon$ for all $t > \overline{t}$ while $u(t) < - 1+\varepsilon$  for all $t < \overline{s}$.

We now prove that $u\in\calX_1$ and is a minimizer of  $\mathcal{J}_1$. Since $W$ is nonnegative, applying Fatou's Lemma, we deduce that
\begin{equation*}
 \int_{-\infty}^{+ \infty} W(u) \, \mathrm{d}t    \le  \liminf_{n\to+\infty}  \int_{-\infty}^{+ \infty} W(u_n) \, \mathrm{d}t   .  
\end{equation*}

On the other hand, the convexity of the function $g(t) = 1-\sqrt{1-t^2}$ 	implies the weak lower semi-continuity of the volume integral
\[
K_1(u) = \int_{-\infty}^{+ \infty}  \left(1-\sqrt{1-|u^\prime|^2}\right)  \mathrm{d}t,
\]
that is,
\begin{equation*}
\text{ if } \quad  u_n^\prime \rightharpoonup u^\prime \text{ in } L^2(\bbR) , \qquad \text{ then } \quad K_1(u) \le \liminf_{n \to + \infty}   K_1(u_n) .
\end{equation*}
Therefore one easily concludes that $\mathcal{J}_1(u)\le  \inf_{\calX_1} \mathcal{J}_1$.

It remains to prove that  $u\in\calX_1$. From the definitions of  $\overline{t}$  and  $\overline{s}$  and the monotonicity of $u$,  it is clear that 
\begin{equation*}
\lim_{t\to -\infty} u(t) = -1 \quad \text{and} \quad \lim_{t\to +\infty} u(t) = +1 .
\end{equation*}
Since for all $n\in\bbN_0$, the function $u_n$ is Lipschitz continuous with Lipschitz constant $L_n\le 1$, we conclude that $u$ is  also  a Lipschitz function with Lipschitz constant $L\le 1$. Hence, by Rademacher's Theorem, $u$ has a classical derivative a.e.~in $\bbR$ and $\norm{u^\prime}_{L^{\infty}(\R)} = L \le 1$.
\end{proof}

We proved the existence of a  minimizer of $\mathcal{J}_1$ in $\calX_1$ and therefore of a one-dimensional minimizer of $\mathcal{J}$ in $\calX$. Our  next step will be to show that this minimizer satisfies $\norm{ \nabla u }_{L^\infty(\R \times \omega; \R^N)}  =  \| u' \|_{L^\infty(\R)} < 1$, which allows to recover the Euler-Lagrange equation \eqref{eqNdim}.

To do so, we introduce a family of auxiliary functions obtained by stretching the graph of a given function $u\in \calX_1$ on a bounded interval. We anticipate that we will apply this construction to a minimizer of $\mathcal{J}_1$. 

\begin{definition}  \label{utheta}
Let $u\in\calX_1$.
For any  points $t_0 < t_1$ and any $0 < \theta < 1$, we define the function $u_\theta : \bbR \to \bbR$ by
\begin{align*} 
u_{\theta}(t) = \left\{ \begin{aligned}
			& u(t) \quad & t < t_0  ,   \\
			& u\big((1-\theta)(t - t_0) + t_0\big) \quad & t_0 \leq t < \overline{t_1}  ,   \\
			& u\Big(t - \frac{\theta}{1 - \theta}(t_1-t_0) \Big) \quad & t \geq \overline{t_1}  ,
	\end{aligned} \right.
\end{align*}
where $\overline{t_1} = t_0 + \frac{t_1- t_0}{1 - \theta} > t_1$. 
\end{definition}
From its definition, it is easily seen  that $u_\theta \in \calX_1$ because we eventually lower the value of the Lipschitz constant of $u$.

\begin{proposition} \label{proposition_1D}
If $u$ is a minimizer of $\mathcal{J}_1$ in $ \calX_1$ then $\norm{u^\prime}_{L^\infty(\R)} < 1$ and $u$ is the unique, up to translations, classical solution of the boundary value problem	\eqref{eqNdim}--\eqref{boundarycondNdim}. Moreover $u$ satisfies \eqref{cons.energy}.
\end{proposition}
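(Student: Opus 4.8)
The plan is to proceed in three stages, of which only the first is really delicate. A preliminary remark to be used throughout: every minimizer $u$ of $\mathcal J_1$ in $\calX_1$ takes values in $[-1,1]$, because the truncation $\tilde u=\max(-1,\min(u,1))$ satisfies $\mathcal J_1(\tilde u)\le\mathcal J_1(u)$ (since $W\ge 0$ with $W(\pm1)=0$, and $v\mapsto 1-\sqrt{1-v^2}$ is nondecreasing on $[0,1]$), and equality forces $\{|u|>1\}$ to be Lebesgue--null, so $u\in[-1,1]$ by continuity; in particular $\max_{[-1,1]}W<\infty$ and Proposition~\ref{existenceofrearrangement} will be applicable to $u$.

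The heart of the matter, and the main obstacle, is the a priori bound $\norm{u^\prime}_{L^\infty(\R)}<1$. I would argue by contradiction, assuming $\norm{u^\prime}_{L^\infty(\R)}=1$, and test minimality against the stretching competitors $u_\theta$ of Definition~\ref{utheta}. A change of variables in each of the three pieces of $u_\theta$ gives, for every $0<\theta<1$ and every $t_0<t_1$,
\[
\mathcal J_1(u_\theta)-\mathcal J_1(u)=\int_{t_0}^{t_1}\Big(\frac{\theta}{1-\theta}\,W(u)-D_\theta(|u^\prime|)\Big)\,\mathrm ds,
\]
where $D_\theta(v):=\bigl(1-\sqrt{1-v^2}\bigr)-\frac{1}{1-\theta}\bigl(1-\sqrt{1-(1-\theta)^2v^2}\bigr)\ge 0$. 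Using $(1-\theta)^2v^2\le(1-\theta)^2$ for $v\in[0,1]$ one obtains the elementary bound $D_\theta(v)\ge\gamma(\theta)-\sqrt{1-v^2}$ with $\gamma(\theta):=\frac{\sqrt{\theta(2-\theta)}-\theta}{1-\theta}>0$ and $\gamma(\theta)/\theta=\frac{\sqrt{(2-\theta)/\theta}-1}{1-\theta}\to+\infty$ as $\theta\to0^+$. I would then fix $\theta$ small enough that $\gamma(\theta)>\frac43\,\frac{\theta}{1-\theta}\max_{[-1,1]}W$. Since $\operatorname{ess\,inf}_{\R}\sqrt{1-|u^\prime|^2}=\sqrt{1-\norm{u^\prime}_{L^\infty}^2}=0$, the set $E=\{\sqrt{1-|u^\prime|^2}<\gamma(\theta)/8\}$ has positive measure, and picking a Lebesgue density point of $E$ yields an interval $I=(t_0,t_1)$ with $\frac1{|I|}\int_I\sqrt{1-|u^\prime|^2}\le\gamma(\theta)/4$; then $\int_I D_\theta(|u^\prime|)\ge\frac34\gamma(\theta)|I|>\frac{\theta}{1-\theta}\int_I W(u)$, so $\mathcal J_1(u_\theta)<\mathcal J_1(u)$, contradicting minimality (note $u_\theta\in\calX_1$: its Lipschitz constant is $\le 1$ and its limits at $\pm\infty$ coincide with those of $u$). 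A purely global stretch does not suffice, because $\{|u^\prime|=1\}$ may be Lebesgue--null even though $\norm{u^\prime}_{L^\infty}=1$; the localisation at a density point is exactly what forces the average of $\sqrt{1-|u^\prime|^2}$ on a short interval to be small, and it is essential that the area gain $D_\theta$ is of size $\sqrt\theta$ near $|u^\prime|=1$ while the potential loss is only of size $\theta$.

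Once $\Lambda:=\norm{u^\prime}_{L^\infty(\R)}<1$, the remainder is more routine. For $\varphi\in C_c^1(\R)$ and $|s|$ small, $u+s\varphi\in\calX_1$ and $s\mapsto\mathcal J_1(u+s\varphi)$ is differentiable at $0$; minimality makes the derivative vanish, yielding the weak equation $\int_\R\bigl(\frac{u^\prime\varphi^\prime}{\sqrt{1-|u^\prime|^2}}+W^\prime(u)\varphi\bigr)=0$. Hence $p:=u^\prime/\sqrt{1-|u^\prime|^2}\in L^\infty$ has distributional derivative $W^\prime(u)$, which is continuous, so $p\in C^1$, $u^\prime=p/\sqrt{1+p^2}$ agrees a.e.\ with a $C^1$ function, $u\in C^2$ is a classical solution of \eqref{eqNdim}, and $|u^\prime(t)|=|p(t)|/\sqrt{1+p(t)^2}<1$ for every $t$. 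Moreover the inner variation $\theta\mapsto\mathcal J_1(u_\theta)$ is now admissible for $\theta$ in a two-sided neighbourhood of $0$ (slightly negative $\theta$ is allowed since $|u^\prime|<1$) and is minimal at $\theta=0$, whence $\frac{\mathrm d}{\mathrm d\theta}\big|_{0}\mathcal J_1(u_\theta)=\int_{t_0}^{t_1}\bigl(1-\frac{1}{\sqrt{1-|u^\prime|^2}}+W(u)\bigr)\,\mathrm ds=0$ for all $t_0<t_1$, and continuity of the integrand forces \eqref{cons.energy}. Finally \eqref{cons.energy} reads $|u^\prime|^2=1-(1+W(u))^{-2}$, and since $u(t)\to\pm1$ gives $W(u(t))\to0$, we get $u^\prime(t)\to0$, i.e.\ \eqref{boundarycondNdim}.

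For uniqueness up to translations, Proposition~\ref{existenceofrearrangement} now applies with its equality clause (we have $\|u^\prime\|_{L^\infty}<1$, $u\in[-1,1]$, $u^\prime\in L^2$): $\mathcal J_1(u^\star)\le\mathcal J_1(u)=\inf_{\calX_1}\mathcal J_1$ with equality, so $u(\cdot+a)=u^\star$ for some $a\in\R$, and in particular $u$ is nondecreasing. By \eqref{cons.energy}, $u^\prime=F(u)$ with $F(s)=\sqrt{1-(1+W(s))^{-2}}$, which is $C^1$ on $(-1,1)$ and vanishes exactly at $s=\pm1$; thus on the (interval) set $\{u\in(-1,1)\}$ the monotone function $u$ is pinned down, by uniqueness for this autonomous first-order ODE, by its value at a single point, while $u\equiv\pm1$ outside. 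Normalising $u(0)=0$ by a translation, any two minimizers therefore coincide, which yields the claimed uniqueness and finishes the proof.
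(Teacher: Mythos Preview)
Your proof is correct and hinges on the same key device as the paper---the stretched competitor $u_\theta$ of Definition~\ref{utheta}---but you extract the information differently. The paper does not argue by contradiction: it divides the inequality $\mathcal J_1(u_\theta)-\mathcal J_1(u)\ge0$ by $t_1-t_0$, sends $t_1\to t_0$ via Lebesgue differentiation, and then lets $\theta\to0^+$, obtaining at almost every $t_0$ the pointwise quantitative bound
\[
|u'(t_0)|\le\sqrt{1-\big(1+W(u(t_0))\big)^{-2}}.
\]
This single chain of limits already yields both $\|u'\|_{L^\infty}<1$ and (half of) the conservation law~\eqref{cons.energy}; the paper then simply remarks that once $\|u'\|_\infty<1$ the same variation, now in the spirit of Dubois--Reymond, gives the full identity. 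Your route---fixing a small $\theta$, exploiting the scaling $\gamma(\theta)\sim\sqrt\theta\gg\theta$, and localising by a density-point argument to force $\mathcal J_1(u_\theta)<\mathcal J_1(u)$---is a shade longer but makes the mechanism (area gain near the singularity dominates the potential loss) very explicit; you then need a separate two-sided inner variation to recover~\eqref{cons.energy}. For uniqueness the paper only says ``standard arguments''; your appeal to the equality clause of Proposition~\ref{existenceofrearrangement} together with the autonomous first-order ODE $u'=\sqrt{1-(1+W(u))^{-2}}$ spells this out correctly.
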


\begin{proof}
Let $u$ be a minimizer of $\mathcal{J}_1$ in $\calX_1$. Then $\mathcal{J}(u)\le \mathcal{J}(v)$ for all $v\in \calX_1$. In particular, this inequality holds for $v=u_\theta$ obtained from $u$ by stretching it in any bounded interval as described above. 
We will  compare the energy of  $u$  and  $u_\theta$. First, notice that 
\begin{equation*}
\begin{aligned}
K_1(u_\theta) 	&= \int_{-\infty}^{+ \infty} \left(1-\sqrt{1-\abs{u_\theta^\prime}^2}\right) \mathrm{d}t   \\
			&=  \int_{-\infty}^{t_0} \left(1-\sqrt{1-\abs{u_\theta^\prime}^2}\right) \mathrm{d}t  +  \int_{t_0}^{\overline{t_{1}}} \left(1-\sqrt{1- \abs{u_\theta^\prime}^2}\right) \mathrm{d}t   +  \int_{\overline{t_{1}}}^{+\infty} \left(1-\sqrt{1- \abs{u_\theta^\prime}^2}\right) \mathrm{d}t .
\end{aligned}
\end{equation*}
Applying the  change of variables $s=(1-\theta)(t - t_0) + t_0$ to the second integral, we get
\begin{equation*}
\begin{aligned}	
\int_{t_0}^{\overline{t_{1}}} \left(1-\sqrt{1-\abs{u_\theta^\prime}^2}\right) \mathrm{d}t  &=    \int_{t_0}^{\overline{t_{1}}} \left(1-\sqrt{1-(1-\theta)^2  \abs{u^\prime \big((1-\theta)(t - t_{0}) + t_{0}\big) }^2 } \right) \mathrm{d}t        \\
 &= \frac{1}{1-\theta}\int_{t_0}^{t_1} \left(1-\sqrt{1-(1-\theta)^2 \abs{u^\prime(s)}^2 }\right)  \mathrm{d}s.
\end{aligned}
\end{equation*}
As for the third integral, the equality
\begin{equation*}
\int_{\overline{t_{1}}}^{+\infty} \left(1-\sqrt{1- \abs{u_\theta^\prime}^2}\right) \mathrm{d}t  =  \int_{t_1}^{+\infty} \left(1-\sqrt{1- \abs{u^\prime}^2}\right) \mathrm{d}t 
\end{equation*}
is a simple consequence of invariance of the integral by translation. Hence, we get
\[
\begin{aligned}
K_1(u_\theta)-K_1(u)
&= \frac{1}{1-\theta}\int_{t_0}^{t_1}  \left( 1-\sqrt{1-(1-\theta)^2 \abs{u^\prime}^2} \right)  \mathrm{d}t  - \int_{t_0}^{t_1}  \left(1-\sqrt{1- \abs{u^\prime}^2} \right) \mathrm{d}t  \\
&=\frac{\theta}{1-\theta} (t_1-t_0) + \frac{\theta(\theta-2)}{1-\theta} \int_{t_0}^{t_1} \frac{1}{(1-\theta)\sqrt{1- \abs{u^\prime}^2}+\sqrt{1-(1-\theta)^2 \abs{u^\prime}^2}}  \,\mathrm{d}t .
\end{aligned}
\]

Applying the same arguments to the potential energy, we conclude that
\begin{equation*}
\int_{-\infty}^{+ \infty}  W(u_\theta)\,\mathrm{d}t - \int_{-\infty}^{+ \infty}  W(u)\,\mathrm{d}t  =  \frac{\theta}{1-\theta} \int_{t_0}^{t_1}  W(u)\,\mathrm{d}t .  
\end{equation*}

Then
for any  $\theta\in\ (0,1)$ and  any  $t_0<t_1$, we  have 
\begin{gather*}
\frac{\theta}{1-\theta}  \bigg(1 + \frac{\theta-2}{t_1-t_0} \int_{t_0}^{t_1} \frac{1}{(1-\theta)\sqrt{1- \abs{u^\prime}^2}+\sqrt{1-(1-\theta)^2 \abs{u^\prime}^2}}  \,\mathrm{d}t + \frac{1}{t_1-t_0} \int_{t_0}^{t_1}  W(u)\,\mathrm{d}t  \bigg)\\
= \frac{\mathcal{J}(u_\theta)-\mathcal{J}(u)}{t_1-t_0}\ge 0.
\end{gather*}

Since $u$ is a Lipschitz function, it has a classical derivative a.e.~in $\bbR$. Then applying Lebesgue's differentiation Theorem,
we may take the limit as $t_1\to t_0$ on the left-hand side of the equality above
and we obtain
\begin{multline*} 
\lim_{t_1\to t_0} \frac{1}{t_1- t_0}   \int_{t_0}^{t_1} \frac{1}{(1-\theta)\sqrt{1- \abs{u^\prime (t)}^2 }+\sqrt{1-(1-\theta)^2 \abs{u^\prime (t) }^2 }}  \,\mathrm{d}t   \\
	=   \frac{1}{(1-\theta)\sqrt{1- \abs{u^\prime(t_0)}^2 }+\sqrt{1-(1-\theta)^2 \abs{u^\prime(t_0)}^2 }}    \quad \text{a.e. } t_0 \in\bbR 
\end{multline*}
and
\begin{equation*}
\lim_{t_1\to t_0} \frac{1}{t_1- t_0} \int_{t_0}^{t_1}  W(u(t))\,\mathrm{d}t  =  W(u(t_0))   \quad \text{a.e. } t_0 \in\bbR .
\end{equation*}
Therefore, for almost every~$t_0 \in\bbR$ and any fixed  $\theta\in\ (0,1)$, we have
\begin{equation*}
\begin{gathered}
 \frac{\theta}{1-\theta} + \frac{\theta(\theta-2)}{1-\theta} \frac{1}{(1-\theta)\sqrt{1- \abs{u^\prime(t_0)}^2 }+\sqrt{1-(1-\theta)^2 \abs{u^\prime(t_0)}^2 }}  +  \frac{\theta}{1-\theta} W(u(t_0)) =
\\
 \lim_{t_1\to t_0} \frac{\mathcal{J}(u_\theta)-\mathcal{J}(u)}{t_1- t_0}  \ge 0,
\end{gathered}
\end{equation*}
which yields the inequality
\begin{equation*} 
	(1-\theta)\sqrt{1 - \abs{u^\prime}^2} + \sqrt{1-(1-\theta)^2 \abs{u^\prime}^2} \ge  \frac{2-\theta}{W(u)+1}   \quad \text{a.e.~in } \bbR.
\end{equation*}
Taking the limit as $\theta \to 0^{+}$, we obtain the following quantitative bound on the derivative of $u$ 
\begin{equation*} 
\abs{u^\prime} \le \sqrt{1- \frac{1}{\big(1+W(u)\big)^2}} <1   \quad \text{a.e.~in } \bbR,	
\end{equation*}
from which we can obtain the uniform estimate 
\begin{equation*} 
\abs{u^\prime} \le \sqrt{1- \frac{1}{\big(1+\max_{[-1,1]} W\big)^2}} <1   \quad \text{a.e.~in } \bbR,
\end{equation*}
and hence conclude that
\[
\norm{u'}_{L^\infty(\R)} <1.
\]
The fact that $u$ is a classical heteroclinic solution of the autonomous scalar equation \eqref{eqNdim}  now follows from standard arguments. Notice that once we know that the minimizer $u$ satisfies the strict inequality $\|u^\prime\|_{L^\infty(\R)} < 1$, the previous argument is reminiscent of a standard variation in the spirit of the Dubois-Reymond Lemma and yields directly (\ref{cons.energy}). Observe also that as soon as $\|u^\prime\|_{L^\infty(\R)} < 1$, it is a standard issue to prove the smoothness of the solution. 
\end{proof}

\begin{remark} \label{rem:grad1not1D}
As a consequence of Proposition \ref{proposition_1D}, we see that	if $u \in \calX$ is a minimizer of $\mathcal{J}$ and $\norm{\nabla u}_{L^\infty(\R \times \omega;\R^N)} = 1$  then $u$ cannot be one-dimensional.
\end{remark}

\begin{remark}
We can also deduce the previous proposition from a gradient estimate from \cite[Theorem 4.1, Corollary 4.3]{BartnikSimon1983}. However this argument does not give a quantitative bound and relies moreover on more delicate estimates. We believe the argument we provide is the more direct one. 
\end{remark}

So far we proved the existence of a one-dimensional minimizer of $\mathcal{J}$ with $\norm{\nabla u}_{L^\infty(\R \times \omega;\R^N)} < 1$. From Proposition \ref{existenceofrearrangement}, we see that any minimizer  of $\mathcal{J}$ with $\norm{\nabla u}_{L^\infty(\R \times \omega;\R^N)} < 1$ depends only on the first variable. But our argument does not exclude the existence of a minimizing sequence of $\mathcal{J}$ converging to some $u_0\in\calX$ with $\norm{ \nabla u_0 }_{L^\infty(\R \times \omega;\R^N)} = 1$. We will use  a regularization argument inspired from \cite{CoelhoCorsatoObersnelOmari2012,BonheureDerletDeCoster2012} to rule out this possibility.

\begin{proof}[Proof of Theorem \ref{theoremfinal}]
Arguing as in Propositions \ref{prop:existence1Dmin}  and  \ref{proposition_1D}, we see that there exists a minimizer $u\in\calX$ of $\mathcal{J}$ with $\norm{ \nabla u }_{L^\infty(\R \times \omega;\R^N)} < 1$ which is a classical smooth solution of the boundary value problem  \eqref{eqNdim}--\eqref{boundarycondNdim} and satisfies the conservation of energy law \eqref{cons.energy}. The proof will be complete once we can exclude the existence of minimizers $u$ of $\mathcal{J}$ with $\| \nabla u \|_{L^\infty(\R \times \omega;\R^N)} = 1$.
 
We introduce the  modified functional  $\mathcal{J}_n \colon  W^{1,\infty}(\bbR\times\omega)  \to [0, +\infty]$ given by
\begin{equation*}
	\mathcal{J}_n (u) = \int_{\mathbb{R} \times \omega}  \left( \Psi_n(| \nabla u |) + W(u)  \right)  \, \mathrm{d}\bar{x},
\end{equation*}
where the function $\Psi_n$ is given  in Definition \ref{gtronqué}. The functional $\mathcal{J}_n$ is not singular since the function $\Psi_n$ is defined in $\bbR$. Also  notice that  for all $u\in\calX$ and all $n \in \mathbb{N}_0$,
\[
\mathcal{J}_n (u) \leq \mathcal{J}(u),
\]
and  by the definition of  $\Psi_n$ the inequality is strict if $\norm{ \nabla u }_{L^\infty(\R \times \omega;\R^N)} = 1$,
since for every $\eps>0$ there exists a set $E \subset \bbR\times\omega$ with positive measure such that $\abs{\nabla u(x)} > 1- \eps$  for a.e.~$x \in E$.

Moreover, Theorem \ref{theoremAlberti} can be applied to this  functional.
So, arguing as above, 
we conclude that $\mathcal{J}_n$ has a unique (up to translations) minimizer $u_n \in \calX$
that depends only on the first variable, which is nondecreasing as a function of this variable and satisfies $u_n(0) = 0$. 
Besides,  $u_n$ is a classical  $C^2$ solution of the equation
\[
\big( \psi_n(u^{\prime}) \big)^{\prime}  =  W'(u),
\]
where $\psi_n  =  \Psi'_n$ and $u_n$ satisfies 
\[
\lim_{x \to \pm \infty} u(x) = \pm1.
\]
Moreover, any solution $u$ of this equation satisfies  the conservation of energy law
\[
 \Upsilon_n \big( \psi_n(u') \big) - W(u)	= E_n ,
\]
for some $E_n\in\bbR$  and
with $\displaystyle \Upsilon_n(y) = \int_0^y \psi_n^{-1} (\xi) \, d\xi $.
From the definition of  the modified operator, we have
\begin{align*}
\Upsilon_n \big( \psi_n(t) \big) 	=	\left\{ \begin{aligned}
	& \frac{1}{\sqrt{1 - t^2}} -  1	 \quad	& \text{ if }  t^2 \le 1-\frac{1}{n^2}\\
	& \tilde a_n  + \tilde b_n \bigg( t^2 - 1+\frac{1}{n^2} \bigg)   +  \tilde c_n \bigg( t^4 - \Big( 1-\frac{1}{n^2} \Big)^2 \bigg)	\quad	& \text{ if }  t^2 > 1-\frac{1}{n^2}
	\end{aligned} \right.
\end{align*}
with  $\tilde a_n = n-1$, $\tilde b_n = b_n - 2 c_n \big( 1-\frac{1}{n^2} \big)$, $\tilde c_n = 3c_n$  and  $a_n$, $b_n$ and $c_n$ being as in Definition \ref{gtronqué}.

Then,   as  $u_n$  satisfies the boundary conditions $\displaystyle \lim_{x \to \pm \infty} u(x) = \pm1$,  the conservation of energy law for $u_n$ becomes
\begin{equation}
			\label{aux:consE-modif}
\frac{1}{\sqrt{1 - \abs{u'_n}^2}} -  1 -  W(u_n)  = E_n ,
\end{equation}
as long as   $\abs{u'_n}^2$ remains sufficiently small. In particular, there is a sequence ${(t_k)}_k$  such that $t_k \to \pm\infty$  and  $u'_n(t_k) \to 0$,   as $k \to +\infty$. Then, taking the limit along this sequence as  $k \to + \infty$ in~\eqref{aux:consE-modif}, we see that $E_n = 0$ for all $n \in \mathbb{N}_0$. So, \eqref{aux:consE-modif} gives
\[
 \Upsilon_n \big( \psi_n(u'_n) \big) - W(u_n)	= 0
\]
and for $\abs{t}$  sufficiently large, we obtain
\[
\frac{1}{\sqrt{1 - \abs{u'_n}^2}} -  1  - W(u_n)	= 0.
\]
Take $n$ sufficiently large so that 
\[
\ell := 1 - \frac{1}{1+\max_{[-1,1]} W}   <  1 - \frac{1}{n^2}.
\]
As long as $|u_n'|^2\le  1 - \frac{1}{n^2}$, we have actually $|u_n'|^2\le\ell$ . But on the other hand, if $|u_n'(t_0)|^2\le\ell$, then $|u_n'(t)|^2 < 1 - \frac{1}{n^2}$ on a neighborhood of $t_0$, which implies
\[
\Upsilon_n \big( \psi_n(u'_n) \big) = \frac{1}{\sqrt{1 - \abs{u'_n}^2}}-1
\]
and therefore the bound $|u_n'(t)|^2\le\ell$ still holds in this neighborhood.
It follows that the equality
\[
\abs{u'_n}^2  =  1 - \frac{1}{1+W(u_n)}  \le   \ell	
\]
holds everywhere. 
In this way, we obtain an uniform bound for~$\abs{u'_n}$	and 
conclude that	$u_n \in \calX$.

Our choice of $n$ then implies that 
\[
\mathcal{J}(u_n)  =  \mathcal{J}_{n}(u_n).
\]
Now, let us  assume that $u_{0} \in \calX$ is a minimizer of $\mathcal{J}$ with $\norm{ \nabla u_0 }_{L^\infty(\R \times \omega;\R^N)}=1$. From Remark~\ref{rem:grad1not1D}, $u_0$ is not one-dimensional. 	 
This means, in particular, that  $u_0$ is not a minimizer of $\mathcal{J}_n$ by our previous discussion, but also as a consequence of  Theorem~\ref{theoremAlberti}.	
Then  we obtain 
\begin{equation*}
\mathcal{J}(u_{n}) = \mathcal{J}_{n} (u_{n}) < \mathcal{J}_{n}(u_{0}) < \mathcal{J}(u_{0}),
\end{equation*}
which contradicts the fact that $u_0$ minimizes $\mathcal{J}$.
\end{proof}

We conclude  this section  with an example for which we can compute an explicit solution.
\begin{example}
By analogy with the classical Allen-Cahn equation (see \cite{Carbou1995,Alberti2000}), we are able to compute the exact solutions to \eqref{eqNdim}--\eqref{boundarycondNdim} when $W$ is given by
\[
W(u) = -1 + \sqrt{\dfrac{2}{2 - {(1-u^2)}^2}}.
\]
Easy computations show that for this potential the  conservation of energy law  simplifies to
\[
- \frac{\abs{u'}^2}{2} + \frac{1}{4} {(u^{2}-1)}^{2}  = 0, 
\]
the same as for the Allen-Cahn equation
and therefore
 all solutions are of the form 
\[
u(t)= \tanh \left(\dfrac{t+a}{\sqrt{2}}\right) \!, 
\] 
where $a$ is a real parameter.
\end{example}

\subsection{One-dimensionality for non-autonomous potentials of the type \texorpdfstring{$W(x,u)$}{W(x,u)}}

In this section, we consider a non-autonomous problem with a potential that depends spacially on the first variable only, that is, a potential of the type  $W(x,u)$. In this situation we can still prove  one-dimensionality of the minimal transition with a quite short argument. Let us consider $\mathcal{G} : \calX \to [0,+\infty]$ and $\mathcal{G}_1 : \calX_1 \to [0,+\infty]$ defined respectively by
\[
\calG(u) = 	\int_{\bbR\times\omega} \left( 1 - \sqrt{1 - |\nabla u|^{2}} + W(x,u) \right) \mathrm{d}\bar{x}
\]
and
\[
\mathcal{G}_1(u) = \int_{-\infty}^{+ \infty} \left( 1 - \sqrt{1- |u'|^2} + W(x,u) \right) \, \mathrm{d}x.
\]

\begin{theorem} \label{thm:nouveau1D}
Assume that $W(\cdot,s) \in L^1_{loc}(\bbR)$ for all $s\in\bbR$ and  $W(x,\cdot)$  satisfies $(W_2)$ for all $x\in\bbR$.
Then
\[
\inf_{\calX} \calG = m_{N-1}(\omega) \inf_{\calX_1} \mathcal{G}_1.
\]
Moreover,  if $ \inf_{\calX_1} \mathcal{G}_1 $ is attained,  then $ \inf_{\calX} \calG $ can only be achieved by  a one-dimensional minimizer of $\mathcal{G}_1$.
\end{theorem}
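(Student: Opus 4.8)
The plan is to exploit the fact that the potential $W(x,u)$ does not depend on the transversal variable $y\in\omega$, so that the minimization over $\calX$ decouples, slice by slice, into one‑dimensional problems of the same type. This replaces the rearrangement machinery of the previous subsections by a one‑line slicing argument. First I would dispose of the easy inequality $\inf_{\calX}\calG\le m_{N-1}(\omega)\inf_{\calX_1}\mathcal{G}_1$: any $v\in\calX_1$ can be viewed as an element of $\calX$ that is constant in $y$, and for such $v$ one has $\calG(v)=m_{N-1}(\omega)\,\mathcal{G}_1(v)$; taking the infimum over $v\in\calX_1$ gives the bound (trivially also when $\inf_{\calX_1}\mathcal{G}_1=+\infty$).

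For the reverse inequality, fix $u\in\calX$. By the Fubini (ACL) property of $W^{1,\infty}$ functions, for a.e.\ $y\in\omega$ the slice $u(\cdot,y)$ lies in $W^{1,\infty}(\bbR)$ with weak derivative $u_x(\cdot,y)$ and $\norm{u_x(\cdot,y)}_{L^\infty(\bbR)}\le\norm{\nabla u}_{L^\infty}\le1$; moreover it inherits the asymptotic behaviour $\lim_{x\to\pm\infty}u(x,y)=\pm1$ (which holds uniformly in $y$ by the definition of $\calX$), so $u(\cdot,y)\in\calX_1$ for a.e.\ $y$. Since $t\mapsto 1-\sqrt{1-t^{2}}$ is strictly increasing on $[0,1]$ and $\abs{u_x}^2\le\abs{\nabla u}^2\le 1$ a.e., we have $1-\sqrt{1-\abs{\nabla u}^2}\ge 1-\sqrt{1-\abs{u_x}^2}$ a.e.; combining this with Tonelli's theorem,
\begin{equation*}
\calG(u)=\int_\omega\!\!\int_{\bbR}\Big(1-\sqrt{1-\abs{\nabla u}^2}+W(x,u)\Big)\,\mathrm{d}x\,\mathrm{d}y\ \ge\ \int_\omega \mathcal{G}_1\big(u(\cdot,y)\big)\,\mathrm{d}y\ \ge\ m_{N-1}(\omega)\inf_{\calX_1}\mathcal{G}_1 .
\end{equation*}
Taking the infimum over $u\in\calX$ gives the matching inequality and hence the claimed identity. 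The only nonroutine point here — the measurability of $(x,y)\mapsto W(x,u(x,y))$ required for Tonelli — follows from a Carathéodory-type reading of the hypotheses on $W$, and I would settle it in a sentence.

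Finally, for the last assertion assume $\inf_{\calX_1}\mathcal{G}_1$ is attained at some $w\in\calX_1$. Seen in $\calX$, $w$ satisfies $\calG(w)=m_{N-1}(\omega)\inf_{\calX_1}\mathcal{G}_1=\inf_{\calX}\calG$, so $\calG$ does attain its infimum. Let now $u\in\calX$ be any minimizer of $\calG$. Both inequalities in the display above must then be equalities. Equality in the first forces $\sqrt{1-\abs{u_x}^2}=\sqrt{1-\abs{\nabla u}^2}$ a.e., i.e.\ $\abs{\nabla_y u}=0$ a.e., hence (as $\omega$ is connected) $u$ depends only on $x$; equality in the second forces $\mathcal{G}_1\big(u(\cdot,y)\big)=\inf_{\calX_1}\mathcal{G}_1$ for a.e.\ $y$, hence $u=u(\cdot)$ is a minimizer of $\mathcal{G}_1$. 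This is exactly the statement that $\inf_{\calX}\calG$ can only be achieved by a one‑dimensional minimizer of $\mathcal{G}_1$. There is no serious obstacle in this argument; the point needing a little care is the slicing step — that almost every slice of an element of $\calX$ belongs to $\calX_1$ and that Tonelli legitimately applies to the potential term — which rests on the standard Fubini property of Sobolev functions together with the measurability of $W$.
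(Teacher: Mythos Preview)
Your proposal is correct and follows essentially the same slicing argument as the paper: both establish the easy inequality by embedding $\calX_1$ into $\calX$, the reverse inequality via Fubini plus the monotonicity of $t\mapsto 1-\sqrt{1-t^2}$ (dropping the transversal derivative), and the rigidity statement by inspecting the equality case. The only cosmetic difference is that the paper phrases the last step as a contradiction (a set $B$ with $\nabla_y u\neq 0$ makes the inequality strict) while you argue directly from the two equalities.
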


\begin{proof}
Let ${(u_n)}_n \subset \calX_1$ be a minimizing sequence of $\mathcal{G}_1$.  Taking $u_n(x,y) = u_n(x)$ for all $n\in \bbN_0$, we see that
\begin{equation*}
\calG(u_n) = m_{N-1}(\omega) \int_\bbR  \left( 1 - \sqrt{1 - |u'_n|^{2}} + W(x,u_n(x)) \right) \mathrm{d}x	\longrightarrow		m_{N-1}(\omega)  \inf_{\calX_1} \mathcal{G}_1.	
\end{equation*}
So,
\[
m_{N-1}(\omega)  \inf_{\calX_1} \mathcal{G}_1	\ge	\inf_{\calX} \calG.
\]
On the other hand,	by Fubini's Theorem and the monotonicity of $t \to 1 - \sqrt{1 - t^2}$  we have
\begin{equation}
\begin{aligned}
		\label{aux.calG}
\calG(u) &=	\int_{\bbR\times\omega} \left( 1 - \sqrt{1 - |\nabla u|^{2}} + W(x,u) \right) \mathrm{d}\bar{x}		\\
		 &= 	\int_\omega \int_\bbR   \left( 1 - \sqrt{1 - |\nabla u|^{2}} + W(x,u) \right)   \, \mathrm{d}x  \, \mathrm{d}y			\\
		 &\ge   \int_\omega \int_\bbR   \left( 1 - \sqrt{1 - |\partial_x u|^{2}} + W (x,u) \right)   \, \mathrm{d}x  \, \mathrm{d}y		\\
		 &=   \int_\omega \mathcal{G}_1 \big(u (\cdot,y)\big)  \, \mathrm{d}y	\\
		 &\ge   \int_\omega \inf_{\calX_1} \mathcal{G}_1   \, \mathrm{d}y		\\		
		 &=	m_{N-1}(\omega) \inf_{\calX_1} \mathcal{G}_1,
\end{aligned}
\end{equation}
for all $u\in \calX$. 
This proves the first statement of the theorem. Now, let us assume (by contradiction) that $\calG$ has a minimizer $u$ in $\calX$ which is not one-dimensional.
Then there is a set $B\subset \bbR\times\omega$, with positive Lebesgue measure, such that $\nabla_y u \not= 0$ a.e.~in $B$. 
Then the first inequality in \eqref{aux.calG} is strict for $u$, since
\[	
\abs{\nabla u}^2 = \abs{\partial_x u}^2 + \abs{\nabla_y u}^2	>  \abs{\partial_x u}^2	,  \quad \text{ in } B.
\]
This leads to a contradiction.

\end{proof}


\section{Heteroclinic solutions for non-autonomous equations}	
\label{section:nonAutonomous} 


This section concerns the study of the non-autonomous functional $\mathcal{L}_a$ given in \eqref{functionalL}. As before, we will look for minimizers of $\calL_a$ in the functional space $\calX_1$, given in \eqref{spaceX1D}, and discuss the existence of  solutions to the associated Euler-Lagrange equation.
In particular, we will discuss assumptions on the weight $a(t)$ that ensure the existence of minimizers of $\calL_a$. In particular, an additional symmetry assumption will also be considered.
Theorem \ref{thm:nouveau1D} motivates to work with a one-dimensional situation only.

\subsection{Regularity of the minimizer and solution of the Euler-Lagrange equation}

We first work out the regularity of minimizers. A difference with respect to the autonomous situation is that we lose the conservation of energy. We will therefore argue differently to derive the a priori bound on the derivative.

The following lemma tells us that assumptions $(a_1^\prime)$ and $(W_1)$ are sufficient to deduce that minimizers are weak solutions of the Euler-Lagrange equation, as well as to derive a local bound on the derivative. 

\begin{lemma} \label{lemma:solutionC1}
Assume $(W_1)$, $(a_1^\prime)$. If $u_a \in \mathcal{X}_1$ is a minimizer of $\mathcal{L}_a$, then $u_a$ is a $C^1 \cap W^{2,2}_{\text{loc}}$ solution of 
\[
\left( \frac{u'}{\sqrt{1 - |u'|^2}} \right)' = a(t) W'(u),  \quad t \in \mathbb{R}
\]
and $\| u_a' \|_{L^\infty_{\text{loc}}(\R)} < 1$
\end{lemma}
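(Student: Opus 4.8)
The plan is to establish three things in turn: (i) the local $L^\infty$ bound on $u_a'$ away from $1$; (ii) that $u_a$ is a weak solution of the Euler--Lagrange equation; and (iii) the promised regularity $C^1\cap W^{2,2}_{\mathrm{loc}}$. For the first and crucial step, I would adapt the stretching argument of Definition~\ref{utheta} to the non-autonomous functional. Given a minimizer $u_a$ and points $t_0<t_1$, form $u_\theta$ as in that definition and compute $\mathcal{L}_a(u_\theta)-\mathcal{L}_a(u_a)$. The volume-integral part is computed exactly as in the proof of Proposition~\ref{proposition_1D} (it does not see the weight), so it contributes
\[
\frac{\theta}{1-\theta}(t_1-t_0)+\frac{\theta(\theta-2)}{1-\theta}\int_{t_0}^{t_1}\frac{\mathrm{d}t}{(1-\theta)\sqrt{1-|u_a'|^2}+\sqrt{1-(1-\theta)^2|u_a'|^2}}.
\]
The potential part, however, now picks up the weight under the change of variables $s=(1-\theta)(t-t_0)+t_0$ on $[t_0,\overline{t_1}]$, and one must also account for the rightward shift by $\tfrac{\theta}{1-\theta}(t_1-t_0)$ on $[\overline{t_1},\infty)$; so instead of a clean $\tfrac{\theta}{1-\theta}\int_{t_0}^{t_1}W(u_a)$ one obtains a term involving $\int_{t_0}^{t_1}a\big((1-\theta)(t-t_0)+t_0\big)W(u_a(\cdot))\,\mathrm{d}t$ together with a change in the tail integral of $a(\cdot)W(u_a)$. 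This is the main obstacle: controlling that tail/weight discrepancy so that dividing by $t_1-t_0$ and using Lebesgue differentiation still gives a pointwise inequality. Using $(a_1')$, i.e. $a\in L^\infty_{\mathrm{loc}}$, one can bound all the weight-dependent contributions on a fixed compact interval by a constant $A$ depending only on $\|a\|_{L^\infty}$ on a neighborhood of $t_0$, which suffices to run the limit $t_1\to t_0$ at a Lebesgue point and then $\theta\to 0^+$, yielding $|u_a'(t_0)|\le \sqrt{1-1/(1+A\,\max_{[-1,1]}W)^2}<1$ locally, hence $\|u_a'\|_{L^\infty_{\mathrm{loc}}(\R)}<1$. (In fact for the localized version it is cleanest to restrict the stretching to $[t_0,t_1]$ contained in a fixed bounded interval and argue locally there.)

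Once $\|u_a'\|_{L^\infty_{\mathrm{loc}}}<1$ is in hand, the integrand $1-\sqrt{1-|u'|^2}$ is, on the relevant range of $u_a'$, a smooth uniformly convex function of the derivative, so the standard first-variation argument applies without the singularity getting in the way. Concretely, for any $\varphi\in C_c^\infty(\R)$ the map $\varepsilon\mapsto \mathcal{L}_a(u_a+\varepsilon\varphi)$ is differentiable near $0$ (since $\|u_a'+\varepsilon\varphi'\|_{L^\infty}<1$ on $\operatorname{supp}\varphi$ for small $\varepsilon$, and $W\in C^1$, $a\in L^\infty_{\mathrm{loc}}$), and vanishing of the derivative at $\varepsilon=0$ gives
\[
\int_\R \frac{u_a'\varphi'}{\sqrt{1-|u_a'|^2}}\,\mathrm{d}t+\int_\R a(t)W'(u_a)\varphi\,\mathrm{d}t=0,
\]
i.e. $u_a$ is a weak solution of $\big(u'/\sqrt{1-|u'|^2}\big)'=a(t)W'(u)$.

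For the regularity conclusion I would proceed by bootstrap. Set $v:=u_a'/\sqrt{1-|u_a'|^2}$; the weak equation says $v'=a(t)W'(u_a)$ in the distributional sense, and since $a\in L^\infty_{\mathrm{loc}}$ and $W'(u_a)$ is continuous and bounded, the right-hand side is in $L^\infty_{\mathrm{loc}}\subset L^2_{\mathrm{loc}}$, so $v\in W^{1,\infty}_{\mathrm{loc}}\subset C^{0,1}_{\mathrm{loc}}$, in particular $v$ is continuous. Inverting the strictly increasing diffeomorphism $s\mapsto s/\sqrt{1-s^2}$ (which is licit because $\|u_a'\|_{L^\infty_{\mathrm{loc}}}<1$ keeps us away from the singular value), $u_a'=v/\sqrt{1+v^2}$ is continuous, hence $u_a\in C^1$. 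Then $W'(u_a)\in C^0$, so $v'=aW'(u_a)\in L^\infty_{\mathrm{loc}}$ and thus $v\in W^{1,2}_{\mathrm{loc}}$; differentiating $u_a'=v/\sqrt{1+v^2}$ gives $u_a''=v'/(1+v^2)^{3/2}\in L^2_{\mathrm{loc}}$, i.e. $u_a\in W^{2,2}_{\mathrm{loc}}$. This completes the proof; the only genuinely delicate point is the derivative bound in step (i), and everything afterward is a routine consequence of it together with the smoothness of the operator away from $|u'|=1$.
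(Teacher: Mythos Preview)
Your overall architecture (gradient bound $\Rightarrow$ first variation $\Rightarrow$ bootstrap) is reasonable, and steps (ii)--(iii) are fine once (i) is in hand. The gap is in step (i): the stretching of Definition~\ref{utheta} is \emph{not local}. The function $u_\theta$ coincides with $u_a$ on $(-\infty,t_0)$ but is a rigid translate of $u_a$ on $[\overline{t_1},+\infty)$, so the potential difference contains the tail term
\[
\int_{t_1}^{+\infty}\Big[a\Big(s+\tfrac{\theta}{1-\theta}(t_1-t_0)\Big)-a(s)\Big]\,W(u_a(s))\,\mathrm{d}s.
\]
Under $(a_1')$ alone ($a\in L^\infty_{\mathrm{loc}}$), this integral is not controlled---it may even fail to be finite, in which case $u_\theta$ is not an admissible competitor and the inequality $\mathcal L_a(u_\theta)\ge \mathcal L_a(u_a)$ carries no information. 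Dividing by $t_1-t_0$ and letting $t_1\to t_0$ does not help without some regularity of $a$ (at least Lipschitz) together with integrability of $W(u_a)$ on the tail, neither of which is assumed. Your parenthetical suggestion to ``restrict the stretching to $[t_0,t_1]$ contained in a fixed bounded interval'' does not fix this: the construction in Definition~\ref{utheta} always displaces the right tail. The paper's own remark immediately following the lemma confirms that the stretching route requires the stronger hypotheses $(a_1)$ and $a\in C^{0,1}$ with $a'\le Ca$.

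The paper avoids this obstruction by a different, genuinely local argument: restrict to $[-M,M]$ with the boundary values of $u_a$ frozen, observe that $u_a$ minimizes the localized functional $\mathcal L_a^M$ among such competitors, and then show via a convexity trick that $u_a$ also minimizes the \emph{linearized} convex functional
\[
\mathcal E_a^M(u)=\int_{-M}^{M}\Big(1-\sqrt{1-|u'|^2}+a(t)W'(u_a)\,u\Big)\,\mathrm{d}t
\]
on the same class. Since $\big(u_a(M)-u_a(-M)\big)/(2M)<1$ for large $M$, the Dirichlet problem for this convex, singular functional falls under the Bartnik--Simon regularity theory \cite{BartnikSimon1983} (see also \cite{CorsatoObersnelOmariRivetti2013,BereanuJebeleanMawhin2014}), which yields $\|u_a'\|_{L^\infty((-M,M))}<1$ and $u_a\in C^1\cap W^{2,2}_{\mathrm{loc}}$ directly. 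This approach uses only $(a_1')$ and $(W_1)$; your approach would need extra global structure on $a$.
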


\begin{proof}
To prove the regularity, we will argue on compact sets of $\mathbb{R}$ and proceed in a similar way as in \cite{MawhinBrezis2010,BereanuJebeleanMawhin2011-2,BonheureDaveniaPomponio2015}. Let $M > 1$ be any large number. If we denote by
\[
\mathcal{L}_{a}^M(u) = \int_{-M}^M \left( 1 - \sqrt{1 - |u'|^2} + a(t) W(u) \right) \, \mathrm{d}t
\] 
and
\[
\mathcal{X}_M := \left\{ u \in \mathcal{X}_1 \, : \, u(t) = u_a(t) \, \text{ for } t \in \mathbb{R} \setminus (-M, M) \right\},
\]
we immediately get that
\begin{equation}	 \label{eq:localmin}
\mathcal{L}_{a}^M (u_a) \leq \mathcal{L}_{a}^M(u), \quad \text{ for all } u \in \mathcal{X}_M,
\end{equation}
by the minimality of $u_a$. Next, we define the linearized functional
\[
\mathcal{E}_a^M(u) = \int_{-M}^M \left( 1 - \sqrt{1 - |u'|^2} + a(t) W'(u_a) u \right) \, \mathrm{d}t.
\]
By hypothesis $(W_1)$ and $(a_1^\prime)$ and the fact that $u_a \in \mathcal{X}_1$, this functional is bounded from below in $\mathcal{X}_M$. Moreover, it is also strictly convex. Therefore it has an unique minimizer $u_M \in \mathcal{X}_M$.

We claim that $u_M = u_a$.
Using \eqref{eq:localmin} and the convexity of the kinetic part 
\[
K_M(u) = \int_{-M}^M \left( 1 - \sqrt{1 - |u'|^2} \right) \, \mathrm{d}t,
\]
we get for $\varepsilon > 0$
\[
\mathcal{L}_a^M(u_a) \leq \mathcal{L}_a^M \big( (1 - \varepsilon) u_a + \varepsilon u_M \big) \leq (1 - \varepsilon) K_M (u_a) + \varepsilon K_M(u_M) + \int_{-M}^M a(t) W \big( (1-\varepsilon) u_a + \varepsilon u_M \big) \, \mathrm{d}t.
\]
Therefore,
\[
K_M(u_M) \geq K_M(u_a) + \int_{-M}^M a(t) \frac{1}{\varepsilon} \Big( W(u_a) - W \big( u_a + \varepsilon (u_M- u_a) \big) \Big) \, \mathrm{d}t.
\]
Thanks to hypothesis $(W_1)$, $(a_1^\prime)$ and the fact that $u_a , u_M  \in \mathcal{X}_1$, we can take the limit as $\varepsilon\to 0$ to obtain
\[
\mathcal{E}_a^M(u_a) \leq \mathcal{E}_a^M(u_M).
\]
Since the minimizer of $\mathcal{E}_a^M$ in $\mathcal{X}_M$ is unique, the claim is proved.

\medbreak

Finally, since $\lim_{t \to \pm \infty} u_a(t) = \pm 1$, we have that for $M$ large enough,
\[
\frac{u_a(M) - u_a(-M)}{2M} < 1.
\]
Therefore, thanks to \cite[Theorem 4.1, Corollary 4.3]{BartnikSimon1983}, see also \cite{CorsatoObersnelOmariRivetti2013,BereanuJebeleanMawhin2014}, we know that the minimizer $u_a$ of $\mathcal{E}_a^M$ in $\mathcal{X}_M$ is a $C^1((-M, M)) \cap W^{2,2}((-M,M))$ solution of the Euler-Lagrange equation associated to the functional $\mathcal{E}_a^M$ and
\[
\| u_a' \|_{L^\infty((-M,M))} < 1.
\]
\end{proof}

\begin{remark}
The argument in the proof of Lemma \ref{lemma:solutionC1} is local. To obtain an uniform quantitative bound, we can proceed as in Proposition \ref{proposition_1D}. However, we need some more restrictive assumptions on $a(t)$, namely  $(a_1)$ and $a\in C^{0,1}(\mathbb{R})$ with
\[
a'(t) \leq C a(t), \quad \text{ for a.e. } t \in \R.
\]
In this case, we obtain the explicit quantitative bound
\[
\| u_a^{\prime} \|_{L^\infty(\R)} 	\leq \sqrt{1 - \frac{1}{  \left(1 + \|a\|_{L^\infty(\R)} \max_{[-1,1]}W + C \inf_{\mathcal{X}_1}\mathcal{L}_a \right)^2}}.
\]
\end{remark}

\subsection{Proof of Theorems \ref{thm.structural} and \ref{thm.periodic}}

Lemma \ref{lemma:limits} is a preliminary result giving a lower bound on the energy functional $\calL_a$ and will be useful for establishing the limit of minimizers at infinity.	
We state it without proof since it may be easily reconstructed from \cite[{\small \sc Fact 1} in \S 2.1]{BonheureSanchez2006} using   estimate~\eqref{quadraticEstimates}.

\begin{lemma}	\label{lemma:limits}
Assume $(a_1)$, $(W_1)$, $(W_2)$. Fix $\eps>0$ and set 
\[
\beta_\eps= \min \big\{  W(s) \mid 1 - \eps \le s \le 1 -\frac{\eps}{2} \text{ or }  -1+\frac{\eps}{2} \le s \le -1+\eps  \big\} .
\]
If $u\in\calX_1$ is such that there exist $t_1, t_2 \in \bbR$ for which $u(t_1)= 1 - \eps$ and $u(t_2)= 1 - \frac{\eps}{2}$ 
(or $u(t_1)= -1+\frac{\eps}{2}$ and $u(t_2)= -1+\eps$),
then we have
\[
\calL_a(u) \ge \abs{\int_{t_1}^{t_2} \left( 1 - \sqrt{1 - |u^{\prime}|^ 2} + a(t) W(u) \right) \, \mathrm{d}t } \ge \frac{\eps\sqrt{a_1 \beta_\eps}}{\sqrt{2}},
\]
where $a_1=\inf_{[t_1,t_2]} a(t)$.
\end{lemma}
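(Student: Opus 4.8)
The statement to prove is Lemma~\ref{lemma:limits}, which gives a lower bound on the action $\calL_a(u)$ in terms of the cost of traversing a small interval near one of the wells. Since the authors refer to \cite[Fact 1, \S2.1]{BonheureSanchez2006} and only ask to adapt it using \eqref{quadraticEstimates}, the plan is to reconstruct that argument directly.

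\textbf{Outline of the plan.} First I would reduce to the case near $+1$, say $u(t_1)=1-\eps$ and $u(t_2)=1-\tfrac{\eps}{2}$ with (WLOG) $t_1<t_2$ and $1-\eps\le u(t)\le 1-\tfrac{\eps}{2}$ on $[t_1,t_2]$ — if not, one can replace $[t_1,t_2]$ by a smaller subinterval on which $u$ first travels between these two levels, which only decreases the integral and keeps the endpoints' values; the symmetric case near $-1$ is identical. Since every term in the integrand is nonnegative (using $1-\sqrt{1-|u'|^2}\ge 0$, $a\ge a_1>0$ by $(a_1)$, and $W\ge 0$ by $(W_2)$), we have $\calL_a(u)\ge \int_{t_1}^{t_2}\big(1-\sqrt{1-|u'|^2}+a(t)W(u)\big)\,\mathrm{d}t$, which is the first inequality (the absolute value is cosmetic).

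\textbf{The core estimate.} For the second inequality, I would use the elementary bound \eqref{quadraticEstimates}, namely $1-\sqrt{1-|u'|^2}\ge \tfrac12|u'|^2$, together with $a(t)\ge a_1$ on $[t_1,t_2]$ and $W(u(t))\ge \beta_\eps$ for $t\in[t_1,t_2]$ (since $u(t)\in[1-\eps,1-\tfrac\eps2]$ there, by the definition of $\beta_\eps$). Then
\begin{equation*}
\int_{t_1}^{t_2}\Big(1-\sqrt{1-|u'|^2}+a(t)W(u)\Big)\,\mathrm{d}t \ \ge\ \int_{t_1}^{t_2}\Big(\tfrac12|u'|^2 + a_1\beta_\eps\Big)\,\mathrm{d}t.
\end{equation*}
Now apply Young's inequality pointwise, $\tfrac12|u'|^2 + a_1\beta_\eps \ge \sqrt{2a_1\beta_\eps}\,|u'|$, so the right-hand side is at least $\sqrt{2a_1\beta_\eps}\int_{t_1}^{t_2}|u'|\,\mathrm{d}t \ge \sqrt{2a_1\beta_\eps}\,\big|\int_{t_1}^{t_2}u'\,\mathrm{d}t\big| = \sqrt{2a_1\beta_\eps}\,|u(t_2)-u(t_1)| = \sqrt{2a_1\beta_\eps}\cdot\tfrac{\eps}{2} = \tfrac{\eps\sqrt{a_1\beta_\eps}}{\sqrt 2}$, which is exactly the claimed bound. (Here $a_1$ in the statement denotes $\inf_{[t_1,t_2]}a$, which by $(a_1)$ is $\ge$ the global $a_1$; the computation above works verbatim with this local infimum.)

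\textbf{Main obstacle.} There is no serious obstacle: the argument is a standard "action $\ge$ weighted length" estimate combined with Young's inequality, and \eqref{quadraticEstimates} provides precisely the quadratic lower bound on the kinetic term needed to make the Born–Infeld integrand behave like the classical one. The only mild care point is the reduction so that $u$ stays in the corridor $[1-\eps,1-\tfrac\eps2]$ (or its reflection) between $t_1$ and $t_2$, ensuring $W(u(t))\ge\beta_\eps$ throughout; this is why one passes to a suitable subinterval first. Everything else is a one-line chain of inequalities, which is why the authors omit it.
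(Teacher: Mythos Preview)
Your proposal is correct and is precisely the reconstruction the paper intends: the authors omit the proof, pointing to \cite[Fact~1, \S2.1]{BonheureSanchez2006} together with the quadratic lower bound \eqref{quadraticEstimates}, and your argument (reduce to a subinterval where $u$ stays in the corridor, use $1-\sqrt{1-|u'|^2}\ge\tfrac12|u'|^2$, apply Young's inequality $\tfrac12|u'|^2+a_1\beta_\eps\ge\sqrt{2a_1\beta_\eps}\,|u'|$, and integrate) is exactly that. The reduction to a subinterval is handled correctly, and since the subinterval lies inside $[t_1,t_2]$ the local infimum of $a$ can only increase, so the stated bound with $a_1=\inf_{[t_1,t_2]}a$ follows.
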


\bigbreak

We are now ready to prove the main theorems of this section. Since we follow the lines of \cite[Theorem 2.4]{BonheureObersnelOmari2013}, we will not provide all the details.

\begin{proof}[Proof of Theorem \ref{thm.structural}]

Let $u_b$ be a minimizer of $\calL_b$. 
We may assume that $ \displaystyle \inf_{\calX_1} {\calL_a} < \min_{\calX_1} \calL_b$, otherwise, $u_a=u_b$ is a minimizer of $\calL_a$.
We divide the proof in three steps.

\medbreak

\noindent\textbf{Step 1. A  minimizing sequence.} 		
Notice  that   for all  $u\in\calX_1$,	$\calL_a(u)   \le  \calL_b(u) $. 

Arguing as in \cite[Theorem 2.4]{BonheureObersnelOmari2013}, we prove that for all $\overline{\eps} \in (0,1)$,
 there is some   $\eps\in \left(0,\frac{\overline{\eps}}{2}\right)$  such that 
there is a minimizing sequence ${(u_n)}_n \subset  \calX_1$, satisfying the following conditions for all $n \in \mathbb{N}_0$
\begin{itemize}
\item[$(i)$]  $-1\le u_n(t) \le 1$ for all $t\in\bbR$,
\item[$(ii)$]  there exist $s_n, t_n \in\bbR$, with $s_n<t_n$, such that 

\vspace{0.2cm}
	\begin{itemize}
	\item[$(a)$]   $u_n(s_n) = -1+\eps$, \ \ $u_n(t_n) = 1-\eps$ and 
	$-1+\eps \le u_n(t) \le 1-\eps$ for all $t\in[s_n,t_n]$,
	
	\vspace{0.2cm}
	\item[$(b)$]   $u_n(t) \le -1+\overline{\eps}$ for all $t\le s_n$ \  and \   $u_n(t) \ge 1-\overline{\eps}$ for all $t\ge t_n$,
	
	\vspace{0.2cm}
	\item[$(c)$]   $\displaystyle \int_{-\infty}^{s_n}  \left(  1 - \sqrt{1 -  \abs{{u_n}^{\prime}}^2} + a(t) W(u_n) \right) \mathrm{d}t  \le  \delta (\overline{\eps}) $   
	\ and \\
	$\displaystyle \int_{t_n}^{+\infty}  \left(  1 - \sqrt{1 - \abs{{u_n}^{\prime}}^2} + a(t) W(u_n)  \right)  \mathrm{d}t  \le  \delta (\overline{\eps}) $,   
	\end{itemize}
	
	\vspace{0.2cm}
\item[$(iii)$]  the sequences ${(s_n)}_n$ and ${(t_n)}_n$ are bounded,
\end{itemize}
where    $\delta (\overline{\eps})$ is a positive  $o(1)$   when $\overline{\eps} \to 0^+$.

\medskip
\noindent\textbf{Step 2. Convergence to a minimizer.} Arguing as in the proof of Proposition \ref{prop:existence1Dmin}, we  see that 
there is some $u\in W^{1,\infty}(\mathbb{R})$ such that,
up to a subsequence, 
\[
u_n  \to u  \text{ in } C^1_{loc}(\bbR)   \quad
\text{ and }  \quad   u_n^\prime \rightharpoonup u^\prime  \text{ in } L^2(\bbR).
\]
Combining the weak lower semicontinuity of the kinetic part $K_1$ with Fatou's Lemma, we infer 
\begin{equation*}
\calL_a(u) 	
= \int_\bbR \left(1-\sqrt{1-|u^\prime|^2} +  a(t) W(u) \right)  \mathrm{d}t  
		\le  \liminf_{n \to + \infty} \calL_a(u_n)   
= \inf_{\calX_1} \calL_a.
\end{equation*}
Furthermore,  considering another subsequence if necessary, we may assume that ${(s_n)}_n$ converges to some $\overline{s}\in\bbR$ and that ${(t_n)}_n$ converges to some $\overline{t}\in \bbR$ with $\overline{s}\le \overline{t}$.
In particular, 
we have
\[
-1 \le u(t) \le -1+\overline{\eps}  \ \text{ for all } t\le \overline{s}
\quad \text{ and } \quad
1-\overline{\eps}  \le u(t) \le 1 \  \text{ for all } t\ge \overline{t}.
\]
From  the estimate $ \int_\bbR  W(u) \, \mathrm{d}t  	\le M$, we deduce that $\displaystyle \liminf_{t \to - \infty} u(t) =  -1$ and $\displaystyle \limsup_{t \to + \infty} u(t) = 1$. Then  by Lemma~\ref{lemma:limits}, 
we see that 
\[
\lim_{t\to\pm\infty} u(t) =\pm1,
\]
so that   $u\in\calX_1$ and $\displaystyle	\calL_a(u) = \min_{\calX_1} \calL_a$.

\medskip
\noindent	\textbf{Step 3. The Euler-Lagrange equation.} 
Finally, thanks to Lemma \ref{lemma:solutionC1} we see that the minimizer of $\calL_a$ is a solution of \eqref{eq:nonAutonomous}. 
Since $\lim_{t \to \pm \infty} u(t) = \pm 1$ and $a(t)$ is bounded, we deduce from this equation that $\lim_{t \to \pm \infty} u'(t)$ exists. 
Moreover, since $u \in \mathcal{X}_1$, this limit can only be zero. 
This and the bound on $\|u'\|_{L^\infty((-M,M))} < 1$, for $M$ large enough, given in Lemma \ref{lemma:solutionC1} implies that $\| u' \|_{L^\infty(\mathbb{R})} < 1$.
\end{proof}

\bigbreak

We will be more sketchy for  the proof of Theorem \ref{thm.periodic}.

\begin{proof}[Proof of Theorem \ref{thm.periodic}] 
Arguing as in  the proof of Theorem \ref{thm.structural}, we  prove that
a minimizing sequence  ${(u_n)}_n$ of $\calL_a$ satisfies properties
$(i)$ and $(ii)$
and that  the sequence ${(t_n-s_n)}_n$
is bounded.
For each $n$, let $k_n\in\bbZ$ be such that $\hat{s}_n=s_n+k_nT \in [0,T)$. If $s_n \not\in[0,T)$, replace $u_n$ with $v_n(t)=u_n(t - k_nT)$.
Since $a$ is $T$-periodic, it is easily seen that $\calL_a(v_n)=\calL_a(u_n)$.
Moreover, $v_n$ satisfies conditions $(i)$ and $(ii)$,  with $s_n$ replaced by $\hat{s}_n$ and $t_n$ replaced by $\hat{t}_n = t_n+k_nT$.
Now, the sequence ${(\hat{s}_n)}_n\subset[0,T)$ is clearly bounded and therefore so is ${(\hat{t}_n)}_n$. The remaining of the proof is then similar.
\end{proof}


\subsection{Odd heteroclinics in a symmetric setting}	\label{subsection:oddHeteroclinics}


In this final section, we prove Theorem \ref{thm:AntiSym}. 
Now,  the functional $\calL_a$ satisfies additional  symmetry properties and we are looking for  antisymmetric solutions of problem \eqref{eq:nonAutonomous}--\eqref{eq:nonAutonomous2}.
Observe that two of the previous assumptions are now relaxed : $a(t)$ may change sign in a compact interval and may be unbounded at infinity. Here the new feature that will prevent from losses of compactness is the monotonicity of the weight $a(t)$. 

\begin{proof}[Proof of Theorem \ref{thm:AntiSym}]
We divide again the proof in several steps.

\medskip
\noindent	\textbf{Step 1. A minimizing sequence.}									
We observe that although $a(t)$ may change sign,  the functional $\calL_a^A$ is bounded from below  in $\calX_A$.
Indeed, if $a(t)$ takes negative values,	assumptions $(a_1^\prime)$, $(a_4)$, $(W_2')$ and
\[
0 \le W(u) \le \max_\bbR W = \max_{[-1,1]} W < + \infty
\] 
imply that there exists a constant $c \geq 0$ such that  for all $u\in\calX_A$
\[
\calL_a^A(u) =  \int_0^{T}   \Big[  1 - \sqrt{1 - |u^{\prime}|^{2}}  + a(t) W(u) \Big]  \, \mathrm{d}t	+	 \int_{T}^{+\infty}  \Big[ 1 - \sqrt{1 - |u^{\prime}|^{2}}  + a(t) W(u) \Big] \, \mathrm{d}t	\ge  - c.
\]
Fix $\varepsilon > 0$. We will prove that there is a minimizing sequence ${(u_{n})}_{n} \subset \mathcal{X}_{A}$ satisfying the following  properties for all $n\in\bbN_0$:
\begin{itemize}
\item[$(i)$]  $ u_n(t) \ge 0$ for all $t\ge0$,
\item[$(ii)$]  $ u_n(t) \le 1$ for all $t \ge 0$,
and either
\[
u_n(t) <1	\quad   \text{ for all }   t \ge T,
\]
or there is an unique  $s_n \geq T$	 such that 
\[
 u_n(t) < 1  \    \text{ for all }\  T \le t < s_n	 \quad \text{ and } \quad 	 u_n (t) = 1    \ 	\text{ for all } \  t \geq s_n ,
\]

\item[$(iii)$] there exists $t_n \ge T+1$  such that 
\[
0\le u_n(t) < 1-\varepsilon  \quad  \text{ for all } T+1 < t < t_n
\]	
and
\[
u_n(t) > 1 - \eps \quad \text{ for all }  t > t_n,
\]

\item[$(iv)$]   the sequence ${(t_n)}_n$ is bounded.
\end{itemize}

\vspace{0.2cm}
The first assertion $(i)$ is clear since $\calL_a^A(\abs{u_n})    =	\calL_a^A(u_n)$ by $(W_3)$.	

Next, let us define the function $v_n(t) =  \inf \big( u_n(t), 1 \big)$. Arguing as in the proof of Lemma \ref{existence1Dseq}, we see that 
\[
\calL_a^A(v_n)  \le  \calL_a^A(u_n)
\]
by  assumption ($W_2^\prime$) and therefore we may replace ${(u_n)}_n$ by ${(v_n)}_n$. Now,  suppose that  there is some  $t \ge T$  such that  $u(t) =1$. Then we define  $s_{n} \ge T$  as
\[
s_{n} = \inf  \big\{ t  \ge  T  \mid  u_n(t) = 1 \big\} 
\]
and consider the function
\begin{equation*}
w_n (t)   =	
 	\begin{cases}	
				u_{n}(t) 		&\text{ if } \,  0 \leq t < s_n, 
		\\[1mm]
				   1 			&\text{ if }  \, t \ge  s_n.
	\end{cases}
\end{equation*}
As $a(t) > 0$ for $t > T$, we see that	
\[
\calL_a^A(w_n)  \le  \calL_a^A(u_n),
\]
and $(ii)$ is established. 

Now,  suppose that $u_n(T+1) \ge 1 - \varepsilon$. In this case,  we set   
\[
t_n = T + 1.
\]
If  $u_n(t) > 1 - \varepsilon$ for all $t > T + 1$,  the function $u_n$ satisfies $(iii)$. Otherwise, set
\[
t^1_n   =   \sup  \big\{  t \geq T + 1 \mid  u_n(t) = u_n(T+1) \big\} 
\]
and define  the function $v^1_n \in \calX_A$ as
\begin{equation*}
v^1_{n}(t) =
 	\begin{cases}	
				u_{n}(t) 		&\text{ if } \,  0 \leq t \leq t_{n} = T+1 , 
		\\[1mm]
		u_{n} \big( t +  t^1_{n} - t_{n} \big)   	&\text{ if }  \, t > t_{n} =T+1.
	\end{cases}
\end{equation*}
On the other hand, 	if $u_n(T+1) < 1 - \varepsilon$,   set 
\[
t_n   =   \inf  \big\{  t \geq T + 1 \mid  u_n(t) = 1 - \varepsilon \big\} 
\]
and define
\[
t^2_n   =   \sup  \big\{  t \geq T + 1 \mid  u_n(t) = 1 - \varepsilon \big\} .
\]
In this case, 	we have  
\[
T+1 < t_n \le t^2_n
\]
and we set 
\begin{equation*}
v^2_{n}(t) =
 	\begin{cases}	
				u_{n}(t) 		&\text{ if } \,  0 \leq t \leq t_{n} , 
		\\[1mm]
		u_{n} \big( t +  t^2_{n} - t_{n} \big)   	&\text{ if }  \, t > t_{n}.
	\end{cases}
\end{equation*}
The functions $v^1_n$ or $v^2_n$ clearly satisfy statement $(iii)$.  Moreover,  due to $(a_2)$ and the definition of $T$,  we have
\begin{align*}
\calL_a^A(v^i_n)  & =	\int_{0}^{t_n}  \left( 1 - \sqrt{1 - |u_n^{\prime}|^2}  +  a(t)  W(u_n) \right)  \, \mathrm{d}t  
\\
		&\hspace{35mm} + \int_{t_n}^{+ \infty}   \left( 1 - \sqrt{1 - |u_{n}^{\prime} ( t +  t^i_{n} - t_{n} ) |^2}  +  a(t)  W \Big( u_n ( t +  t^i_{n} - t_{n} )  \Big) \right) \, \mathrm{d}t 
\\
&=
	\int_{0}^{t_n}  \left( 1 - \sqrt{1 - |u_n^{\prime}|^2}   +  a(t)  W(u_n) \right)  \,\mathrm{d}t  
\\
		&\hspace{35mm} 
					+  \int_{t^i_n}^{+ \infty}  \left( 1 - \sqrt{1 - |u_n^{\prime}(s)|^2}  +  a \Big( s-( t^i_{n} - t_{n} ) \Big)  W \big( u_n (s)  \big) \right)  \, \mathrm{d}s 
\\
  & \le	\int_{0}^{t_n}   \left( 1 - \sqrt{1 - |u_n^{\prime}|^2}  +  a(t)  W(u_n) \right)  \, \mathrm{d}t 
					+  \int_{t^i_n}^{+ \infty}  \left( 1 - \sqrt{1 - |u_n^{\prime}(s)|^2}  +  a(s)  W \big( u_n (s)  \big) \right)  \, \mathrm{d}s 
\\
  & \le	\calL_a^A(u_n).
\end{align*}
Therefore, replacing $u_n$ by $v^i_n$, if necessary, we may assume that the minimizing sequence ${(u_n)}_n$ satisfies (iii).

We next show that the sequence ${(t_n)}_n$ is bounded. Assume that $t_n > T+1$ for some $n\in\bbN_0$, otherwise the proof is complete. Since  ${(u_n)}_n$ is a minimizing sequence,  there is some constant $c>0$  such that
\[
\int_{0}^{+\infty} a(t) W(u_n) \, \mathrm{d}t    =		\int_{0}^{T + 1} a(t) W(u_n) \, \mathrm{d}t + \int_{T + 1}^{t_n} a(t) W(u_n) \, \mathrm{d}t + \int_{t_n}^{\infty} a(t) W(u_n) \, \mathrm{d}t   \leq c,
\]
for all $n\in\bbN_0$. By ($W_2^\prime$), we have
\begin{equation}
			\label{aux-abs-int}
\abs{ \int_0^{T+ 1}  a(t) W(u) \, \mathrm{d}t }	\le	(T + 1)\, \max_{[0,T+1]}  \abs{a} \, \max_{[-1,1]} W,
\end{equation}
for all $u \in \calX_A$,  while 
the third integral in the expression above is nonnegative by ($W_2^\prime$)  and the definition of $T$.
Then we obtain
\begin{equation}
			\label{aux-above}
\int_{T+ 1}^{t_n} a(t) W(u_n) \, \mathrm{d}t   \leq   c + (T + 1)\, \max_{[0,T+1]}\abs{a} \, \max_{[-1,1]}  W   =   c_1.
\end{equation}
On the other hand,  as $a$ is nondecreasing, we have, for all $n\in\bbN_0$,
\[
\inf_{[T+1,t_n]} a  =  a(T+1)  > 0,
\]
 by the definition of $T$. In addition, since $0 \le u_n(t) < 1-\eps $ for $T+1 \leq t  <  t_n$, we obtain
\begin{equation}
			\label{aux-below}
\int_{T+ 1}^{t_n} a(t) W(u_n) \, \mathrm{d}t 	\ge	\big(t_n - (T+1) \big)  \,  a(T+1)  \Big( \min_{[0,1-\eps ]} W \Big)  > 0 .
\end{equation}
Our claim  follows from estimates \eqref{aux-above} and \eqref{aux-below}.

\medskip
\noindent	\textbf{Step 2. Convergence to a minimizer.}									
From   \eqref{quadraticEstimates} and \eqref{aux-abs-int},  we have
\[
\frac{1}{2} \norm{u_n^\prime}_{L^2(\R)}   \le	\int_0^{+\infty} \left( 1 - \sqrt{1 - |u_n^{\prime}|^2}  \right) \mathrm{d}t	 \le	\calL_a^A (u_n)  -    \int_0^{T+ 1}  a(t) W(u) \, \mathrm{d}t    \le  c_1.
\]

Then arguing  as in Proposition \ref{prop:existence1Dmin},  we see that there is $ u \in W^{1,\infty} \big( [0,+\infty) \big)$ such that 
$
u_n$ converges uniformly to $u$ on every compact  $K\subset [0,+\infty)$, 
$u_n^\prime$ converges weakly to $u^\prime$  in $L^2 \big( [0,+\infty) \big)$
and 
$t_n  \longrightarrow   \overline{t}  \ge T+1$. 
Combining the weak lower semicontinuity of the kinetic part with Lebesgue's dominated convergence Theorem in $[0,T]$ and Fatou's Lemma in $[T,+\infty)$ for the potential part, 
we see that 
\begin{align*}
\mathcal{L}_{a}^A(u) 	&\le	\liminf_{n \to +\infty}   \int_0^{+\infty}  \left( 1 - \sqrt{1 - |u_n^{\prime}|^2}  \right) \mathrm{d}t   +	\lim_{n \to +\infty}    \int_0^{T}   a(t) W(u_n) \, \mathrm{d}t    +	\liminf_{n \to +\infty}   \int_T^{+\infty}   a(t) W(u_n) \,  \mathrm{d}t
\\
&\le  \liminf_{n \to +\infty}   \int_0^{+\infty}  \left(  1 - \sqrt{1 - |u_n^{\prime}|^2}  \, \mathrm{d}t   +  a(t) W(u_n)  \right) \, \mathrm{d}t   
\\
&=     \lim_{n \to +\infty} \calL_a^A(u_n) =  \inf_{\mathcal{X}_{A}} \mathcal{L}_{a}^A .
\end{align*}
Furthermore,  the uniform convergence on every compact  implies that 	
\[
u(0)=0,      \quad	   1-\eps \le u(t)  \le  1   \ \text{ for all }  t \ge  \overline{t}  \quad \text{ and }	\quad	\norm{ u^{\prime} }_{L^\infty(\R)} \leq 1.
\]
Finally, since 	
$\displaystyle \int_{T+1}^{+\infty}	W(u) \, dt$ is bounded,  we have  
$
\displaystyle	\limsup_{t\to +\infty} u(t) = +1
$
and we conclude from Lemma~\ref{lemma:limits}  that	
$\displaystyle \lim_{t \to + \infty} u(t) = 1$.

Therefore, $u \in \mathcal{X}_A$ and $\displaystyle \mathcal{L}_a^A(u) = \inf_{\mathcal{X}_A}\mathcal{L}_a^A$.

\medskip
\noindent	\textbf{Step 3.  The Euler-Lagrange equation.}
Proceeding in the same way as in Lemma \ref{lemma:solutionC1}, we obtain that $u$ is a $C^1([0,+\infty)) \cap W^{2,2}_{loc}([0,+\infty))$ solution of the boundary value problem
\begin{gather*}
	\bigg(  \frac{u'}{\sqrt{1 - |u'|^2}}  \bigg)'   =   a(t) W^{\prime}(u) \quad\text{ in } [0,+\infty),
\\
 u(0) = 0 , 	   \quad \lim_{t \to + \infty}	u(t) = 1,
\end{gather*}
with $\|u'\|_{L^\infty_{\text{loc}}(\R^+)} < 1$. 
Observe that $u$ is increasing on $[T,+\infty)$ since $a$ is nondecreasing. Otherwise there are  points $T \leq t_1 < t_2$  such that $u(t_1)=u(t_2)<1$ and the function $w \in \calX_A$ defined by 
\begin{equation*}
w(t) =
 	\begin{cases}	
				u(t) 		&\text{ if } \,  0 \leq t \leq t_{1}, 
		\\[1mm]
		u \big( t +  t_2 - t_{1} \big)   	&\text{ if }  \, t > t_{1}.
	\end{cases}
\end{equation*}
has a strictly lower action than $u$. Therefore $\lim_{t \to + \infty} u'(t)$ exists and is equal to zero. From this and the local estimates, we see that $\| u'\|_{L^\infty(\R)} < 1$.

\end{proof}

\bigskip

\bibliographystyle{plain} 
\bibliography{bibliography.bib}

\end{document}